\theoremstyle{plain}
\newtheorem{theorem}{Theorem}[section]
\newtheorem{lemma}[theorem]{Lemma}
\newtheorem{proposition}[theorem]{Proposition}
\newtheorem{corollary}[theorem]{Corollary}
\theoremstyle{definition}
\newtheorem{notation}[theorem]{Notation}
\newtheorem{example}[theorem]{Example}
\newtheorem{definition}[theorem]{Definition}
\theoremstyle{remark}
\newtheorem{remark}[theorem]{Remark}
\DeclareMathOperator{\rank}{rank}
\begin{document}

\title[Semigroups of transformations that preserve a partition]
{Regular, Unit-regular, and Idempotent elements of semigroups of transformations that preserve a partition}


\author[Mosarof Sarkar]{\bfseries Mosarof Sarkar}
\address{Department of Mathematics, Central University of South Bihar, Gaya, Bihar, India}
\email{mosarofsarkar@cusb.ac.in}

\author[Shubh N. Singh]{\bfseries Shubh N. Singh}
\address{Department of Mathematics, Central University of South Bihar, Gaya, Bihar, India}
\email{shubh@cub.ac.in}



\begin{abstract}
Let $X$ be a set and $\mathcal{T}_X$ be the full transformation semigroup on $X$. For a partition $\mathcal{P}$ of $X$, we consider semigroups $T(X, \mathcal{P}) = \{f\in \mathcal{T}_X| (\forall X_i\in \mathcal{P}) (\exists X_j \in \mathcal{P})\;X_i f \subseteq X_j\}$, $\Sigma(X, \mathcal{P}) = \{f\in T(X, \mathcal{P})|(\forall X_i \in \mathcal{P})\; Xf \cap X_i \neq \emptyset\}$, and $\Gamma(X, \mathcal{P}) = \{f\in \mathcal{T}_X|(\forall X_i\in \mathcal{P})(\exists X_j\in \mathcal{P})\; X_i f = X_j\}$. We characterize unit-regular elements of both $T(X, \mathcal{P})$ and $\Sigma(X, \mathcal{P})$ for finite $X$. We discuss set inclusion between $\Gamma(X, \mathcal{P})$ and certain semigroups of transformations preserving $\mathcal{P}$. We characterize and count regular elements and idempotents of $\Gamma(X, \mathcal{P})$. For finite $X$, we prove that every regular element of $\Gamma(X, \mathcal{P})$ is unit-regular and also calculate the size of $\Gamma(X, \mathcal{P})$.
\end{abstract}



\subjclass[2010]{20M17; 20M20.}
\keywords{Semigroups of transformations; Partitions; Idempotents; Regular elements; Unit-regular elements.}

\maketitle
\section{Introduction}

We assume that the reader is familiar with the basic terminology and facts of combinatorics and semigroup theory. Throughout the paper, let $X$ denote a set containing at least three elements, let $\mathcal{P}=\{X_i|\; i\in I\}$ denote a partition of $X$, and let $\mathcal{T}_X$ denote the full transformation semigroup on $X$. For a subset $A\subseteq X$, we denote by $Af$ the image of $A$ under $f \in \mathcal{T}_X$. We say that a map $f\in \mathcal{T}_X$ \emph{preserves} a partition $\mathcal{P}$ of $X$ if for every $X_i \in \mathcal{P}$, there exists $X_j \in \mathcal{P}$ such that $X_i f \subseteq X_j$.

\vspace{0.2cm}
In $1994$, Pei \cite{hpei-s94} introduced the subsemigroup
\[T(X, \mathcal{P}) = \{f\in \mathcal{T}_X|(\forall X_i \in \mathcal{P})(\exists X_j \in \mathcal{P})\; X_i f \subseteq X_j\}\]
of $\mathcal{T}_X$. If $\mathcal{P}$ is a trivial partition of $X$, then it is obvious that $T(X, \mathcal{P}) = \mathcal{T}_X$. Pei proved in \cite[Theorem 2.8]{hpei-s94} that $T(X, \mathcal{P})$ is the semigroup of all continuous selfmaps on $X$ endowed with the topology having $\mathcal{P}$ as a basis. Since then the semigroup $T(X, \mathcal{P})$ and its subsemigroups, for instance, $\Sigma(X, \mathcal{P}) = \{f\in T(X, \mathcal{P})|(\forall X_i \in \mathcal{P})\; Xf \cap X_i \neq \emptyset\}$ and the group of units $S(X, \mathcal{P})$ of $T(X, \mathcal{P})$ have received great attention from many semigroup theorists (see, e.g., \cite{arjo15, arjo-c04, arjo09, east-c16, east16, v-ferna-c11, v-ferna-c14, pei-s98, pei-s05, pei-c05, hpei-ac11, hpeidin-s05, shubh-c20, lsun-s07, lsun-jaa13}).

\vspace{0.2cm}
There have also been several studies focused on the regular elements and also the idempotents of semigroups of transformations that preserve a partition. It is worth mentioning that the semigroup $\mathcal{T}_X$ is regular (cf. \cite[p.33]{clifford61}). Pei \cite{pei-c05} characterized the regular elements of $T(X, \mathcal{P})$, and then concluded that the semigroup $T(X, \mathcal{P})$ is regular if and only if $\mathcal{P}$ is a trivial partition of $X$. Purisang and Rakbud \cite{puri16} characterized the regularity of $\Sigma(X, \mathcal{P})$. Dolinka et al. characterized as well as enumerated the  idempotents of $T(X, \mathcal{P})$ for a finite set $X$ in \cite{east-c16} and \cite{east16} for the uniform and non-uniform cases, respectively. The authors \cite{shubh-c20} characterized as well as enumerated the idempotents of $\Sigma(X, \mathcal{P})$ for an arbitrary set $X$ and a finite set $X$, respectively. The cardinal of semigroups of transformations that preserve a partition have also been determined (see, e.g., \cite{v-ferna12, shubh-c20, lsun-13}).


\vspace{0.2cm}
For a partition $\mathcal{P}$ of an arbitrary set $X$, let
\[\Gamma(X, \mathcal{P}) = \{f\in \mathcal{T}_X|(\forall X_i \in \mathcal{P})(\exists X_j \in \mathcal{P})\; X_i f = X_j\}.\]
It is easy to see that $\Gamma(X, \mathcal{P})$ is a subsemigroup of $T(X, \mathcal{P})$. If $X$ is a finite set and $\mathcal{P}$ is a uniform partition of $X$, a number of interesting properties of $\Gamma(X, \mathcal{P})$ have been obtained. For example, Pei proved in \cite[Theorem 4.1]{pei-s05} that $\Gamma(X, \mathcal{P})$ is exactly the semigroup of all closed selfmaps on $X$ endowed with the topology having $\mathcal{P}$ as a basis. Pei next proved in \cite[Theorem 4.1]{pei-s05} that the semigroup $\Gamma(X, \mathcal{P})$ is regular. Moreover, Pei \cite[Theorem 4.2]{pei-s05} obtained an upper bound for the rank of $\Gamma(X, \mathcal{P})$. Finally, Ara\'{u}jo and Schneider \cite[Theorem 1.1]{arjo09} computed the rank of $\Gamma(X, \mathcal{P})$.


\vspace{0.1cm}
The rest of the paper is organized as follows. In the next section, we introduce notations, definitions, and results which are used within the paper. In Section $3$, we give a characterization of the unit-regular elements of the semigroups $T(X, \mathcal{P})$ and $\Sigma(X, \mathcal{P})$ for a finite set $X$. In Section $4$, we discuss the set inclusion between $\Gamma(X, \mathcal{P})$ and certain existing semigroups of transformations that preserve the partition $\mathcal{P}$. In Section $5$, we give a characterization of the regular elements as well as the idempotents of $\Gamma(X, \mathcal{P})$. For a finite set $X$, we further prove that every regular element of $\Gamma(X, \mathcal{P})$ is unit-regular. In Section $6$, we count the number of elements, idempotents, and regular elements of $\Gamma(X, \mathcal{P})$ for a finite set $X$.

\vspace{0.1cm}
\section{Preliminaries and Notations}
The purpose of this section is to introduce notations, definitions, and results that we will use within the paper. We refer the reader to the standard books \cite{bru10, howie95} for additional information from combinatorics and semigroup theory, respectively.

\vspace{0.1cm}
Unless otherwise specified, we will use capital letter to denote nonempty subset, calligraphic letter to denote collection of subsets, and small letter to denote set-element, map, or positive integer. The letter $I$ will be reserved for an arbitrary indexing set. We denote by $\mathbb{N}$ the set of all positive integers. For $m \in \mathbb{N}$, the symbol $I_m$ denote the subset $\{1,\ldots, m\}$. The \emph{size} or \emph{cardinality} of a set $A$ is the number of elements in $A$, and it is denoted by $|A|$. A finite set of size $n \in \mathbb{N}$ is called an \emph{$n$-element set} or \emph{$n$-set}. We write $A\setminus B$ to denote the set of all elements $x \in A$ such that $x\notin B$. We denote by $\binom{n}{r}$ the number of $r$-subsets of an $n$-set.

\vspace{0.2cm}
Let $X$ be a nonempty set, and let $m, k \in \mathbb{N}$ such that $m \ge k$. A \emph{partition} of $X$ is a collection of nonempty disjoint subsets, called \emph{blocks}, whose union is $X$. A partition is called \emph{trivial} if it has only singleton blocks or a single block. A partition $\mathcal{P}$ is called \emph{uniform} if all the blocks of $\mathcal{P}$ have the same size; otherwise, $\mathcal{P}$ is called \emph{non-uniform}. A partition is said to be \emph{$m$-partition} if it has exactly $m$ blocks. If an $m$-partition has exactly $k$ distinct sizes of blocks, we say that the partition is an \emph{$(m, k)$-partition}. A \emph{$k$-subpartition} of an $m$-partition $\mathcal{P}$ is a subcollection of $\mathcal{P}$ containing $k$ blocks. The number of partitions of an $m$-element set into $k$ blocks is denoted by $S(m, k)$, and is called the \emph{Stirling number of the second kind} (cf. \cite[Theorem 8.2.5]{bru10}). It is also known that the number of surjective maps from an $m$-element set onto a $k$-element set is $k!S(m, k)$.

\vspace{0.2cm}
A \emph{selfmap} on a set $A$ is a map from $A$ to itself. The composition of maps will be denoted by juxtaposition. Let $f, g \in \mathcal{T}_X$ and $x\in X$. We will write $xf$ to denote the image of $x$ under $f$, and compose maps from left to right: $x(fg) = (xf)g$. The domain, codomain, and range of a map $\alpha$ will be denoted by $\mbox{dom}(\alpha)$, $\mbox{codom}(\alpha)$, and $\mbox{ran}(\alpha)$, respectively. The \emph{rank} of a map $\alpha$, denoted by $\mbox{rank}(\alpha)$, is the cardinality of $\mbox{ran}(\alpha)$. The pre-image of $B\subseteq X$ under $f$ is denoted by $Bf^{-1} = \{x\in X\;|\; xf\in B\}$. If $A, B \subseteq X$ such that $Af \subseteq B$, then there is a map $h \colon A \to B$ defined by $xh = xf$ for all $x\in A$ and, in this case, we say that the map $h$ \emph{is induced by} $f$.

\vspace{0.2cm}
Let $S$ be a semigroup with identity. An element $a\in S$ is said to be \emph{regular} if there exists $b \in S$ such that $a = aba$; otherwise, $a$ is called \emph{irregular}. If each element of $S$ is regular, then we say that the semigroup $S$ is \emph{regular}. The set of all regular elements of $S$ will be denoted by $\mbox{Reg}(S)$. If $A \subseteq S$, then we write $\mbox{Reg}(A)$ for the set $A \cap \mbox{Reg}(S)$. An element $a$ of $S$ is said to be \emph{unit-regular} if there exists a unit $u \in S$ such that $a = aua$. Note that every unit-regular element of $S$ is contained in $\mbox{Reg}(S)$.  If each element of $S$ is unit-regular, then we say that the semigroup $S$ is \emph{unit-regular}. An element $a \in S$ is called an \emph{idempotent} if $a^2 = a$. The set of all idempotents of $S$ is denoted by $E(S)$. Note that $E(S) \subseteq \mbox{Reg}(S)$. If $A \subseteq S$, then we write $E(A)$ for the set $A \cap E(S)$. It is worth mentioning that $f\in \mathcal{T}_X$ is an idempotent if and only if $f$ acts as the identity map on its image set (cf. \cite[p.6]{clifford61}).

\vspace{0.1cm}

\section{Unit-regular elements of $T(X, \mathcal{P})$}

In this section, we give a characterization of the unit-regular elements of the semigroup $T(X, \mathcal{P})$ for a finite set $X$. Using this, we also characterize the unit-regular elements of the semigroup $\Sigma(X, \mathcal{P})$ for a finite set $X$. Recall that an element $a$ of a semigroup with identity is unit-regular if there exists a unit $u$ such that $a = aua$. The definition of unit-regular elements of a semigroup with identity is a natural one. The notion of unit-regular elements was first appeared in the context of rings \cite{ehrlich68}. There have been a number of interesting works on the unit-regular elements and unit-regular semigroups (see, e.g., \cite{t-blyth83, y-chaiya19, schen74, alarcao80, j-fount02, hickey97, ytira79}).

\vspace{0.2cm}
It is worth mentioning that the semigroup $\mathcal{T}_X$ is unit-regular if and only if $X$ is a finite set (cf. \cite[Proposition 5]{alarcao80}). Thus, if $\mathcal{P}$ is a trivial partition of a finite set $X$ then $T(X, \mathcal{P})$ is unit-regular. Pei \cite{pei-c05} gave a characterization of the regular elements of $T(X, \mathcal{P})$. However, there can be a regular element in $T(X, \mathcal{P})$ for a nontrivial partition $\mathcal{P}$ of $X$ which is not unit-regular as shown in the following example.

\begin{example}
Let $\mathcal{P}=\{\{1\},\{2,3\}\}$ be a partition of $X=\{1,2,3\}$. Define a selfmap $f\colon X\to X$ by
\[f =
\left(
  \begin{array}{ccc}
    1 & 2 & 3 \\
    2 & 1 & 1 \\
  \end{array}
\right).\]

It is clear that $f\in T(X,\mathcal{P})$. Since $f = fff$, it follows that $f$ is a regular element of $T(X, \mathcal{P})$. We can also observe that there are only two selfmaps in  $T(X,\mathcal{P})$, namely $f$ and $g$ such that $f= fff$ and $f= fgf$, where $g\colon X\to X$ is defined by
\[g = \left(
  \begin{array}{ccc}
    1 & 2 & 3 \\
    3 & 1 & 1 \\
  \end{array}\right).\]
Note that both selfmaps $f$ and $g$ on $X$ are not bijective. Hence $f$ is not a unit-regular element of $T(X,\mathcal{P})$.
\end{example}

Let us now recall a definition from \cite{puri16}.

\begin{definition} (cf. \cite{puri16})
Let $\mathcal{P}=\{X_i|\;i\in I\}$ be a partition of a set $X$, and let $f \in T(X,\mathcal{P})$. Then the \emph{character} of $f$, denoted by $\chi^{(f)}$, is a selfmap $\chi^{(f)}\colon I \to I$ defined by  \[i\chi^{(f)}=j \mbox{ whenever } X_i f\subseteq X_j.\]
\end{definition}

\vspace{0.05cm}
If $X$ is a finite set, the selfmap  $\chi^{(f)}$ has also been discussed and denoted by $\overline{f}$ (see \cite{arjo15, east-c16, east16}).

\vspace{0.1cm}
The next remark is simple.
\begin{remark}\label{gamma-char}
Let $\mathcal{P} = \{X_i|\; i\in I\}$ be a partition of a set $X$, and let $f\in \Gamma(X, \mathcal{P})$. Then \[i\chi^{(f)} = j \mbox{ if and only if } X_i f = X_j.\]
\end{remark}

\vspace{0.2cm}

The following theorem characterizes the unit-regular elements of the semigroup $T(X,\mathcal{P})$ for a finite set $X$.

\begin{theorem}\label{unit-reg-txp}
Let $\mathcal{P} = \{X_i|\; i\in I_m\}$ be an $m$-partition of a finite set $X$, and let $f\in T(X,\mathcal{P})$. Then $f$ is unit-regular if and only if for each $i\in I_m\chi^{(f)}$ there exists $j \in I_m$ such that $|X_i| = |X_j|$ and $X_i \cap Xf = X_j f$.
\end{theorem}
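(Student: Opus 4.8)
The plan is to reduce the identity $f = fuf$ for a unit $u$ to a section condition on the range $Xf$, and then to translate that condition block by block, using the character $\chi^{(f)}$ together with the known shape of the units of $T(X,\mathcal{P})$. Two preliminary observations drive everything. First, since $X$ is finite, an element $u \in T(X,\mathcal{P})$ is a unit (that is, $u \in S(X,\mathcal{P})$) exactly when $u$ is a bijection carrying each block $X_a$ bijectively onto the block $X_{a\chi^{(u)}}$ with $|X_a| = |X_{a\chi^{(u)}}|$; in particular $\chi^{(u)}$ is then a permutation of $I_m$. Second, writing $Y = Xf$, the equation $f = fuf$ holds if and only if $(yu)f = y$ for every $y \in Y$, i.e.\ $u$ sends each $y \in Y$ to an $f$-preimage of $y$.

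For the forward implication, assume $f = fuf$ with $u$ a unit and put $\sigma = \chi^{(u)}$. Fix $i \in I_m\chi^{(f)}$, so that $Y_i := X_i \cap Xf \neq \emptyset$, and let $y \in Y_i \subseteq X_i$. Then $yu \in X_{i\sigma}$, and the section condition $(yu)f = y$ gives $y \in X_{i\sigma}f$; hence $Y_i \subseteq X_{i\sigma}f$. Conversely $X_{i\sigma}f \subseteq X_{(i\sigma)\chi^{(f)}} \cap Xf$, and since $Y_i$ is nonempty this forces $(i\sigma)\chi^{(f)} = i$ and therefore $X_{i\sigma}f \subseteq X_i \cap Xf = Y_i$. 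Thus $X_{i\sigma}f = X_i \cap Xf$, while $|X_i| = |X_{i\sigma}|$ because $u$ is a unit; taking $j = i\sigma$ yields the asserted condition.

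For the converse, the work is to manufacture a unit from the hypothesis. I would first note that the assignment $i \mapsto j$ provided by the hypothesis is injective on $I_m\chi^{(f)}$: distinct image blocks give disjoint nonempty sets $X_i \cap Xf$, hence distinct sets $X_j f$, hence distinct $j$. Since moreover $|X_i| = |X_j|$ for each such pair, this is a size-preserving injection of $I_m\chi^{(f)}$ into $I_m$, so its domain and its image carry the same multiset of block sizes, and so therefore do their complements; consequently it extends to a size-preserving permutation $\sigma$ of $I_m$ with $i\sigma = j$ on $I_m\chi^{(f)}$. I would then define $u$ blockwise: for $i \in I_m\chi^{(f)}$ the hypothesis gives $X_{i\sigma}f = X_i \cap Xf =: Y_i$, so $f$ maps $X_{i\sigma}$ onto $Y_i$; choosing an $f$-preimage $\phi(y) \in X_{i\sigma}$ for each $y \in Y_i$ and setting $yu = \phi(y)$, I extend this injection to a bijection $X_i \to X_{i\sigma}$ (possible since $|X_i| = |X_{i\sigma}|$), while on each block $X_i$ with $i \notin I_m\chi^{(f)}$ I let $u$ be an arbitrary bijection onto $X_{i\sigma}$. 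Then $u \in S(X,\mathcal{P})$ and $(yu)f = \phi(y)f = y$ for every $y \in Y$, so $f = fuf$ and $f$ is unit-regular.

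The step I expect to be the crux is this converse construction: one must check that the purely local, existence-style hypothesis assembles into a single global unit — that the size-preserving partial matching of blocks extends to a genuine block-size-preserving permutation, and that on each block one can choose a bijection onto an equal-sized block which simultaneously restricts to a section of $f$ over the range portion $Y_i$. The injectivity of $i \mapsto j$, and the resulting equality of the size multisets on the two complements, are precisely what make this assembly possible.
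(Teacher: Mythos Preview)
Your proof is correct and follows essentially the same strategy as the paper's: both directions hinge on reducing $f=fuf$ to a section condition on $Xf$ and then working block by block via $\chi^{(u)}$. Your converse is in fact tidier than the paper's --- you explicitly verify that $i\mapsto j$ is injective and use the resulting equality of size multisets to extend to a block-size-preserving permutation $\sigma$, whereas the paper performs an equivalent but more ad hoc repair on the non-range blocks without spelling out why the needed equal-size targets are always available.
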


\begin{proof}
Suppose first that $f\in T(X,\mathcal{P})$ is unit-regular. Then there exists $g\in S(X,\mathcal{P})$ such that $f= fgf$. Let $i\in I_m\chi^{(f)}$.
Since $g\in S(X,\mathcal{P})$, there exists $j\in I_m$ such that $X_i g \subseteq X_j$ and so $|X_i| = |X_j|$ by \cite[Lemma 3.6 (ii)]{shubh-c20}.
We now verify that $X_i \cap Xf = X_j f$.

\vspace{0.0cm}
Since $i\in I_m\chi^{(f)}$, there exists $k\in I_m$ such that $X_k f \subseteq X_i$. Recall that $X_i g \subseteq X_j$ and $f= fgf$. We then obtain
\[X_k f = (X_k f)gf \subseteq X_i(gf) = (X_ig)f \subseteq X_j f.\] It concludes that $X_j f \subseteq X_i$ and subsequently $X_j f \subseteq X_i \cap Xf$. On the other hand, let $y \in X_i \cap Xf$. Then there exists $x\in X$ such that $xf = y$. Since $y \in X_i$ and $X_i g \subseteq X_j$, we have $yg \in X_j$. Since $f = fgf$, we then obtain \[y = xf = (xf)gf = (yg)f \in X_j f.\] Thus $X_i \cap Xf \subseteq X_j f$ and consequently $X_i \cap Xf = X_j f$. Since $i\in I_m\chi^{(f)}$ is an arbitrary element, the necessary part of the proof is complete.

\vspace{0.1cm}
Conversely, assume that the condition holds and we need to find a bijection $g \in S(X, \mathcal{P})$ such that $f = fgf$.

\vspace{0.1cm}
If $i\in I_m\chi^{(f)}$, by hypothesis, there exists $j\in I_m$ such that $|X_i| = |X_j|$ and $X_i \cap Xf = X_j f$. For each $x'\in X_jf$, we arbitrarily fix an element $x''$ in $\{x\in X_j\;|\; xf = x'\}$. Note that $x' \in X_i$, $x''\in X_{j}$, and $x''f=x'$. Since $|X_i|=|X_j|$, we can choose a bijective map $g_i\colon X_i\to X_j$ such that $x'g_i=x''$.

\vspace{0.1cm}
If $i\in I_m\setminus I_m\chi^{(f)}$, then we consider the following two possibilities separately.

\vspace{0.1cm}
\noindent \textbf{Case} ($X_i = \mbox{codom}(g_j)$ for some $j\in I_m\chi^{(f)}$): Then we choose a bijective map $h_i$ from $X_i$ onto a block $X_k$, where $k\in I_m\chi^{(f)}$ and $X_k$ is not the image of some previously defined bijective map.

\vspace{0.1cm}
\noindent \textbf{Case} ($X_i \neq \mbox{codom}(g_j)$ for all $j\in I_m\chi^{(f)}$): Then we choose the identity map on the block $X_i$.


Using these bijective maps on blocks of $\mathcal{P}$, we now define a selfmap $g\colon X\to X$ by setting
\begin{equation*}
xg=
\begin{cases}
xg_i, & \text{ if $x\in X_i$ where $i\in I_m\chi^{(f)}$};\\
xh_i, & \text{ if $x\in X_{i}$ where $i\in I_m\setminus I_m\chi^{(f)}$ and $X_{i}=\mbox{codom}(g_j)$ for some $j\in I_m\chi^{(f)}$};\\
x, & \text{otherwise}.
\end{cases}
\end{equation*}
Observe that $g\in S(X,\mathcal{P})$. We finally show that $f = fgf$. Let $x\in X$. Then $xf \in X_i$ for some $i\in I_m\chi^{(f)}$. Set $xf = y$ and $y g_i = z$. Then $zf = y$ by definition of bijection $g_i$. We therefore obtain \[x(fgf) = y(gf) = zf = y = xf.\] Since $x\in X$ is an arbitrary element, we have $f = fgf$. Hence $f$ is a unit-regular of the semigroup $T(X, \mathcal{P})$. This completes the proof.
\end{proof}

\vspace{0.1cm}

If $X$ is a finite set, by using Theorem \ref{unit-reg-txp} and Corollary 3.5 of \cite{shubh-c20}, we prove the following proposition which characterizes the unit-regular elements of the semigroup $\Sigma(X,\mathcal{P})$.
\begin{proposition}
Let $\mathcal{P} = \{X_i|\; i\in I_m\}$ be an $m$-partition of a finite set $X$, and let $f\in \Sigma(X,\mathcal{P})$. Then $f$ is unit-regular if and only if $|X_i| = |X_j|$ whenever $i\chi^{(f)} = j$.
\end{proposition}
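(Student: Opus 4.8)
The plan is to deduce this from Theorem~\ref{unit-reg-txp} together with the observation, recorded in Corollary~3.5 of \cite{shubh-c20}, that membership in $\Sigma(X,\mathcal{P})$ is governed entirely by the character. Concretely, I would first note that for $f\in T(X,\mathcal{P})$ the image $Xf$ meets every block if and only if $\chi^{(f)}$ is surjective: if $Xf\cap X_l\neq\emptyset$ then some $X_if$ meets $X_l$, which forces $i\chi^{(f)}=l$; conversely, surjectivity of $\chi^{(f)}$ makes each block the target of a nonempty set $X_if$. Since $I_m$ is finite, $f\in\Sigma(X,\mathcal{P})$ therefore forces $\chi^{(f)}$ to be a \emph{permutation} of $I_m$, so in particular $I_m\chi^{(f)}=I_m$ and the quantifier ``for each $i\in I_m\chi^{(f)}$'' appearing in Theorem~\ref{unit-reg-txp} now ranges over all of $I_m$.

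The heart of the argument is a concrete description of $X_i\cap Xf$. Because $\chi^{(f)}$ is a bijection, for each $i\in I_m$ there is a \emph{unique} $k\in I_m$ with $k\chi^{(f)}=i$, that is, $X_k$ is the only block whose image lands inside $X_i$; using disjointness of the blocks I would then show $X_i\cap Xf=X_kf$. With this in hand the equivalence falls out of Theorem~\ref{unit-reg-txp}. For the forward direction, unit-regularity supplies $j\in I_m$ with $|X_j|=|X_i|$ and $X_jf=X_i\cap Xf=X_kf$; the common value $X_jf=X_kf$ is a nonempty set contained in both $X_{j\chi^{(f)}}$ and $X_i$, so disjointness of blocks gives $j\chi^{(f)}=i=k\chi^{(f)}$, and injectivity of $\chi^{(f)}$ forces $j=k$. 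Hence $|X_i|=|X_k|$, i.e.\ $|X_{k\chi^{(f)}}|=|X_k|$; letting $i$ (equivalently $k$) range over $I_m$ yields exactly the asserted condition that $|X_i|=|X_j|$ whenever $i\chi^{(f)}=j$.

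For the converse I would simply feed the size condition into Theorem~\ref{unit-reg-txp}: given $i\in I_m$ and the unique $k$ with $k\chi^{(f)}=i$, the hypothesis gives $|X_k|=|X_{k\chi^{(f)}}|=|X_i|$, so taking $j=k$ produces a block with $|X_j|=|X_i|$ and $X_jf=X_kf=X_i\cap Xf$, which is precisely what Theorem~\ref{unit-reg-txp} requires. I expect the only delicate point to be the rigidity step $X_jf=X_kf\Rightarrow j=k$: it relies simultaneously on the disjointness of the blocks and on the bijectivity of $\chi^{(f)}$ guaranteed by the $\Sigma$-hypothesis, and it is exactly this rigidity that collapses the somewhat intricate condition of Theorem~\ref{unit-reg-txp} into the clean block-size equality stated here. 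Everything else is routine bookkeeping with the character map.
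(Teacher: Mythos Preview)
Your proposal is correct and follows essentially the same approach as the paper: both reduce the question to Theorem~\ref{unit-reg-txp} via the bijectivity of $\chi^{(f)}$ (Corollary~3.5 of \cite{shubh-c20}), using that bijectivity to identify $X_i\cap Xf$ with $X_kf$ for the unique preimage block. Your write-up is in fact more explicit than the paper's about the ``rigidity'' step $X_jf=X_kf\Rightarrow j=k$, which the paper compresses into the single phrase ``It simply concludes that $i\chi^{(f)}=j$''; otherwise the arguments are the same.
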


\begin{proof}
Suppose first that $f\in \Sigma(X,\mathcal{P})$ is unit-regular. Since $\Sigma(X,\mathcal{P})\subseteq T(X,\mathcal{P})$, by Theorem \ref{unit-reg-txp},  for each $j\in I_m\chi^{(f)}$ there exists $i\in I_m$ such that $|X_i| = |X_j|$ and $Xf \cap X_j = X_i f$. It simply concludes that $i \chi^{(f)} = j$. Since $\mathcal{P}$ is an $m$-partition, by \cite[Corollary 3.5]{shubh-c20}, the selfmap $\chi^{(f)}$ is bijective on $I_m$ and hence $i\chi^{(f)} = j$.

\vspace{0.1cm}

Conversely, suppose that $|X_i| = |X_j|$ whenever $i\chi^{(f)} = j$. By definition of $\chi^{(f)}$, we have $X_i f \subseteq X_j$. Since $\mathcal{P}$ in an $m$-partition of finite set $X$ and $f\in \Sigma(X,\mathcal{P})$, by \cite[Corollary 3.5]{shubh-c20}, the selfmap $\chi^{(f)}$ is bijective on $I_m$. It follows that $X_i f = X_j \cap Xf$. Hence $f$ is unit-regular by Theorem \ref{unit-reg-txp}.
\end{proof}

\vspace{0.1cm}
\section{Set Inclusion}
In this section, we discuss the set inclusion between $\Gamma(X, \mathcal{P})$ and certain known semigroups of transformations that preserve the partition $\mathcal{P}$. If $X$ is a finite set, we also observe that the intersection of the semigroups $\Gamma(X, \mathcal{P})$ and $\Sigma(X, \mathcal{P})$ is exactly the group of units $S(X, \mathcal{P})$ of the semigroup $T(X, \mathcal{P})$. We begin with the following interesting theorem, whose proof is similar to the proof of Theorem 4.1 in \cite{pei-s05}.

\begin{theorem}
Let $\mathcal{P}$ be a partition of an arbitrary set $X$. Then $\Gamma(X, \mathcal{P})$ is the semigroup of all closed selfmaps on $X$ endowed with topology having $\mathcal{P}$ as a basis.
\end{theorem}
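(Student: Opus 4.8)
The plan is first to make the topology explicit and thereby reduce the statement to a combinatorial one. Writing $\tau$ for the topology on $X$ generated by the basis $\mathcal P$, its open sets are exactly the unions of blocks of $\mathcal P$. Because the blocks are pairwise disjoint and cover $X$, the complement of a union of blocks is again a union of blocks; hence a subset of $X$ is closed if and only if it is open, that is, if and only if it is a union of blocks. Consequently a selfmap $f$ of $X$ is closed precisely when $Cf$ is a union of blocks for every $C$ that is a union of blocks. With this reformulation I would prove the two inclusions separately, using Remark \ref{gamma-char} to identify $X_i f$ with $X_{i\chi^{(f)}}$ whenever $f\in\Gamma(X,\mathcal P)$.

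One inclusion, that every element of $\Gamma(X,\mathcal P)$ is a closed selfmap, is the routine one. Given $f\in\Gamma(X,\mathcal P)$ and a closed set $C=\bigcup_{i\in J}X_i$, I simply compute $Cf=\bigcup_{i\in J}X_i f=\bigcup_{i\in J}X_{i\chi^{(f)}}$, which is a union of blocks and hence closed; this uses only that $f$ maps each block onto a block, and it works for an arbitrary set $X$ and an arbitrary partition $\mathcal P$.

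The reverse inclusion, that every closed selfmap belongs to $\Gamma(X,\mathcal P)$, is where I expect the main obstacle. Taking a closed selfmap $f$ and a block $X_i$, closedness of $X_i$ gives that $X_i f$ is a nonempty union of blocks, say $X_i f=\bigcup_{k\in K}X_k$ with $K\neq\emptyset$. What must still be shown is that $K$ is a singleton, i.e. that $X_i$ is carried onto a single block. Closedness by itself does not force this, since it permits the image of one block to spread across several blocks (a phenomenon that genuinely occurs for non-uniform partitions); so the natural and correct reading is that the closed selfmaps under consideration are the closed maps of the topological monoid, namely the members of $T(X,\mathcal P)$, which are continuous by \cite[Theorem 2.8]{hpei-s94}. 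Granting continuity, I would invoke the connectedness structure of $\tau$: each block carries the indiscrete subspace topology and is therefore connected, while any set meeting two distinct blocks is disconnected because each block is clopen; thus the connected subsets of $(X,\tau)$ are exactly the subsets of a single block. Since $f$ is continuous, $X_i f$ is connected and so lies in a single block $X_j$, and combining $X_i f=\bigcup_{k\in K}X_k$ with $X_i f\subseteq X_j$ yields $K=\{j\}$ and $X_i f=X_j$. Hence $f\in\Gamma(X,\mathcal P)$.

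Finally I would note that both sides are semigroups under composition (the composite of two closed maps is closed, and $\Gamma(X,\mathcal P)$ is a subsemigroup of $T(X,\mathcal P)$), so the set equality upgrades to an equality of semigroups. The single delicate point throughout is the passage from ``union of blocks'' to ``single block'' in the reverse inclusion; in the uniform case it can instead be settled by the cardinality bound $|X_i f|\le|X_i|$, which cannot accommodate two full blocks, matching the route of \cite[Theorem 4.1]{pei-s05}, whereas for a general partition it is the continuity/connectedness argument that pins the image down to one block.
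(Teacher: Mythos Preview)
Your diagnosis is sharper than the paper's own treatment: the paper gives no proof, merely referring to Pei's argument for the finite uniform case, and that argument does not transfer precisely where you say it fails. Concretely, with $\mathcal{P}=\{\{a\},\{b\},\{c,d\}\}$ and $f$ fixing $a,b$ while $cf=a$, $df=b$, every union of blocks has image a union of blocks, so $f$ is a closed selfmap; yet $\{c,d\}f=\{a,b\}$ spans two blocks, so $f\notin\Gamma(X,\mathcal{P})$ (indeed $f\notin T(X,\mathcal{P})$). Thus the theorem, read with the standard meaning of ``closed map'', is false for non-uniform $\mathcal{P}$; a similar bijection-splitting trick shows it also fails for uniform partitions with infinite blocks, so Pei's cardinality bound $|X_if|\le|X_i|$ really cannot be recycled here.

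Your repair---reading ``closed selfmap'' as ``closed element of the monoid $T(X,\mathcal{P})$ of continuous selfmaps''---does yield a true statement, and your connectedness argument for the reverse inclusion is clean and valid for arbitrary $X$ and $\mathcal{P}$, which is more than the paper's deferred proof delivers. The forward inclusion is fine as written. The one thing to make explicit is that you are amending the hypothesis rather than proving the theorem as stated; say so directly instead of presenting the reinterpretation as ``the natural and correct reading''.
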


\vspace{0.1cm}
Note that $\Gamma(X, \mathcal{P}) \subseteq T(X, \mathcal{P}) \subseteq \mathcal{T}_X$. Moreover, we have the following obvious remark.
\newpage
\begin{remark}
Let $\mathcal{P}$ be a trivial partition of a set $X$.
\begin{enumerate}
\item[\rm(i)] If $\mathcal{P}$ has singleton blocks, then $\Gamma(X,\mathcal{P})= \mathcal{T}_X$.
\item[\rm(ii)] If $\mathcal{P}$ has a single block, then $\Gamma(X,\mathcal{P})$ is the semigroup of all surjective transformations on $X$.
\item[\rm(iii)] If $X$ is an $n$-element set and $\mathcal{P}$ has a single block, then $\Gamma(X,\mathcal{P})$ is the symmetric group on $X$.
\end{enumerate}
\end{remark}

\vspace{0.05cm}
The next proposition provides a necessary condition for a map in $T(X,\mathcal{P})$ to be in $\Gamma(X,\mathcal{P})$.

\begin{proposition}\label{image-size-equal}
Let $\mathcal{P} = \{X_i|\; i\in I\}$ be a partition of a set $X$, and let $f\in T(X,\mathcal{P})$. If $f\in \Gamma(X,\mathcal{P})$, then there exists $X_i \in\mathcal{P}$ such that $|X_i| = |X_if|$.
\end{proposition}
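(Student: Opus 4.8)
The plan is to pass to the character $\sigma = \chi^{(f)}$ of $f$, regard it as a selfmap of the index set $I$, and locate a block whose size is preserved under $\sigma$. First I would record the basic size inequality. Since $f \in \Gamma(X,\mathcal{P})$, Remark \ref{gamma-char} gives $X_i f = X_{i\sigma}$ for every $i \in I$, so $f$ restricts to a surjection of $X_i$ onto $X_{i\sigma}$; hence $|X_i| \geq |X_{i\sigma}| = |X_i f|$ for all $i \in I$. The proposition then reduces to showing that this inequality is an equality for at least one block.

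I would argue by contradiction. Suppose instead that $|X_i| > |X_i f| = |X_{i\sigma}|$ for every $i \in I$. Fix any $i_0 \in I$ and iterate the character, setting $i_{n+1} = i_n \sigma$. Then the cardinalities $|X_{i_0}|, |X_{i_1}|, |X_{i_2}|, \ldots$ form an infinite, strictly decreasing sequence of cardinals. This contradicts the well-ordering of the cardinals, which admits no infinitely descending chain; hence the strict inequality cannot hold for every block, and this is exactly the assertion $|X_i| = |X_i f|$ for some $i$.

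The point that requires care is that the argument must cover infinite $X$, since the proposition is stated for an arbitrary set. If $\mathcal{P}$ were finite one could avoid cardinal considerations entirely: a selfmap of a finite set has a periodic point, and on a cycle $i \to i\sigma \to \cdots \to i$ the size inequalities chain around to force $|X_i| = |X_{i\sigma}| = \cdots$, exhibiting a suitable block directly. For infinite $I$, however, $\sigma$ need not possess any periodic point, so this cyclic shortcut is unavailable, and the descending-chain obstruction is the right replacement. The only subtlety I expect is the appeal to the impossibility of an infinite strictly decreasing sequence of cardinals, which is where the genuine content of the statement lies.
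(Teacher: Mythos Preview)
Your argument is correct. Both your proof and the paper's rest on the well-ordering of cardinals, but the paper takes the more direct route: it simply selects a block $X_k$ whose cardinality is least among $\{|X_i| : i \in I\}$ (such a minimum exists because any nonempty set of cardinals has a least element), and then $|X_k f| \le |X_k|$ together with $X_k f \in \mathcal{P}$ and the minimality of $|X_k|$ forces $|X_k f| = |X_k|$ immediately. Your contradiction via an infinite strictly descending chain of cardinals is the contrapositive of the same well-ordering fact and is perfectly valid, but the minimum-element formulation is shorter and renders the iteration of $\sigma$ and the separate discussion of finite versus infinite $I$ unnecessary.
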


\begin{proof}
If $f\in \Gamma(X,\mathcal{P})$, then $X_if\in \mathcal{P}$ for all $X_i \in \mathcal{P}$. It follows that $|X_i f| \le |X_i|$ for each $X_i\in \mathcal{P}$. Let $X_k \in \mathcal{P}$ such that $|X_k| \le |X_j|$ for all  $X_j \in \mathcal{P}$. Since $X_k$ is a block of the smallest size, we then have $|X_k|=|X_kf|$. This completes the proof.
\end{proof}

\begin{lemma}\label{s-sub-gama}
If $\mathcal{P}$ is a partition of a set $X$, then $S(X,\mathcal{P})\subseteq  \Gamma(X,\mathcal{P})$.
\end{lemma}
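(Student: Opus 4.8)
The plan is to unwind what membership in the group of units $S(X,\mathcal{P})$ means and then exploit the fact that the inverse also lives in $T(X,\mathcal{P})$. Since $T(X,\mathcal{P})$ is a submonoid of $\mathcal{T}_X$ (it contains the identity map), an element $f\in S(X,\mathcal{P})$ is precisely a bijection on $X$ such that both $f$ and $f^{-1}$ belong to $T(X,\mathcal{P})$; that is, both $f$ and $f^{-1}$ preserve the partition $\mathcal{P}$. My goal is to show that for such an $f$, every inclusion $X_if\subseteq X_j$ guaranteed by $f\in T(X,\mathcal{P})$ is in fact an equality, which is exactly the defining condition for $f\in\Gamma(X,\mathcal{P})$.

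First I would fix $f\in S(X,\mathcal{P})$ and an arbitrary block $X_i\in\mathcal{P}$. Since $f\in T(X,\mathcal{P})$, there is a block $X_j\in\mathcal{P}$ with $X_if\subseteq X_j$. Since $f^{-1}\in T(X,\mathcal{P})$, there is a block $X_k\in\mathcal{P}$ with $X_jf^{-1}\subseteq X_k$. Applying $f^{-1}$ to the inclusion $X_if\subseteq X_j$ yields $X_i\subseteq X_jf^{-1}\subseteq X_k$. Here is the crucial step: $X_i$ and $X_k$ are both blocks of the partition $\mathcal{P}$, hence either equal or disjoint; since $X_i$ is nonempty and contained in $X_k$, they cannot be disjoint, and so $X_i=X_k$. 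Consequently $X_jf^{-1}\subseteq X_i$.

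The proof then closes by applying $f$ to the last inclusion. I would write $X_j=(X_jf^{-1})f\subseteq X_if\subseteq X_j$, where the first equality uses that $f$ is a bijection (so $f^{-1}f$ is the identity on $X$) and the last inclusion is the one chosen at the outset. Squeezing gives $X_if=X_j\in\mathcal{P}$. As $X_i$ was an arbitrary block, every block is mapped by $f$ onto a block, so $f\in\Gamma(X,\mathcal{P})$, and the inclusion $S(X,\mathcal{P})\subseteq\Gamma(X,\mathcal{P})$ follows.

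I do not anticipate a serious obstacle here; the only genuine content is identifying the units of $T(X,\mathcal{P})$ with those bijections whose inverse again preserves $\mathcal{P}$, and then recognizing that the partition property of $f^{-1}$ is precisely what promotes the one-sided inclusion $X_if\subseteq X_j$ to an equality. The disjointness-or-equality property of blocks, used to force $X_i=X_k$, is the small combinatorial fact that drives everything.
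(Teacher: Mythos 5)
Your proof is correct. The paper disposes of this lemma in one line by citing Lemma 3.6(i) of the authors' earlier work \cite{shubh-c20}, which asserts precisely that a unit of $T(X,\mathcal{P})$ maps every block onto a block; so the entire mathematical content is outsourced to that reference. You instead prove the fact from scratch: you identify the units of $T(X,\mathcal{P})$ as the bijections $f$ of $X$ with both $f$ and $f^{-1}$ in $T(X,\mathcal{P})$, and then use the partition-preservation of $f^{-1}$ together with the equal-or-disjoint property of blocks to upgrade $X_i f \subseteq X_j$ to $X_i f = X_j$. Each step checks out: $X_i \subseteq X_j f^{-1} \subseteq X_k$ forces $X_i = X_k$, and applying the bijection $f$ to $X_j f^{-1} \subseteq X_i$ gives $X_j \subseteq X_i f$. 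What your route buys is a self-contained argument valid for arbitrary (including infinite) $X$, which matters here since the lemma is stated for an arbitrary partition, and it makes explicit that the hypothesis genuinely needed is $f^{-1}\in T(X,\mathcal{P})$ rather than mere bijectivity of $f$; what the paper's route buys is brevity and consistency with its practice of reusing the block-map machinery of \cite{shubh-c20}.
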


\begin{proof}
Let $f\in S(X,\mathcal{P})$. Then, by \cite[Lemma 3.6(i)]{shubh-c20}, we have $X_i f \in \mathcal{P}$ for all $X_i \in \mathcal{P}$. It simply concludes that $f\in \Gamma(X,\mathcal{P})$. Hence the result follows.
\end{proof}

There is now a natural curiosity that whether or not every map of $\Gamma(X,\mathcal{P})$ belongs to $S(X,\mathcal{P})$. We give a partial answer affirmatively to the question in the following proposition.

\begin{proposition}\label{Gamma-bij-char}
Let $\mathcal{P} = \{X_i|\; i\in I_m\}$ be an $m$-partition of a finite set $X$, and let $f\in \Gamma(X, \mathcal{P})$. If $\chi^{(f)}$ is a bijective map, then $f\in S(X, \mathcal{P})$.
\end{proposition}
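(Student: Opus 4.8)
The plan is to show directly that $f$ is a unit of $T(X,\mathcal{P})$, i.e.\ that $f$ is a bijection of $X$ whose set-theoretic inverse $f^{-1}$ again lies in $T(X,\mathcal{P})$; since the only candidate for an inverse of $f$ in $\mathcal{T}_X$ is $f^{-1}$, this is exactly the statement that $f$ belongs to the group of units $S(X,\mathcal{P})$.

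First I would use the hypotheses to force surjectivity. Because $f\in\Gamma(X,\mathcal{P})$, Remark \ref{gamma-char} gives $X_i f = X_{i\chi^{(f)}}$ for every $i\in I_m$. As $I_m$ is finite and $\chi^{(f)}$ is bijective, $\chi^{(f)}$ is a permutation of $I_m$, so $\{\,i\chi^{(f)} : i\in I_m\,\} = I_m$. Taking the union over all blocks and using $X=\bigcup_{i\in I_m}X_i$, I get
\[
Xf = \bigcup_{i\in I_m} X_i f = \bigcup_{i\in I_m} X_{i\chi^{(f)}} = \bigcup_{j\in I_m} X_j = X,
\]
so $f$ is surjective. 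Since $X$ is finite, a surjective selfmap of $X$ is automatically injective, and hence $f$ is a bijection of $X$.

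Next I would check that the inverse preserves the partition, in fact that $f^{-1}\in\Gamma(X,\mathcal{P})$. Fix a block $X_k$. Since $\chi^{(f)}$ is a permutation, there is a unique $i\in I_m$ with $i\chi^{(f)}=k$, and then $X_i f = X_k$. Applying the bijection $f^{-1}$ to both sides yields $X_k f^{-1} = X_i\in\mathcal{P}$. Thus $f^{-1}$ maps every block onto a block, so $f^{-1}\in\Gamma(X,\mathcal{P})\subseteq T(X,\mathcal{P})$. Together with $f\in T(X,\mathcal{P})$ and $f f^{-1}=f^{-1}f=\mathrm{id}_X$, this exhibits $f$ as an invertible element of $T(X,\mathcal{P})$, i.e.\ $f\in S(X,\mathcal{P})$.

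I expect that there is no serious obstacle here: the whole argument rests on the single observation that the permutation hypothesis on $\chi^{(f)}$, combined with the block-onto-block identities $X_i f = X_{i\chi^{(f)}}$ coming from membership in $\Gamma(X,\mathcal{P})$, makes the images of the blocks recover all of $X$, and that finiteness of $X$ upgrades the resulting surjectivity to bijectivity. Once $f$ is known to be a bijection, passing the identities $X_i f = X_{i\chi^{(f)}}$ through $f^{-1}$ to see that $f^{-1}$ also respects $\mathcal{P}$ is routine, and everything else is bookkeeping. (As a byproduct, injectivity of $f$ also gives $|X_i|=|X_{i\chi^{(f)}}|$ for all $i$, consistent with the size constraint on units recorded in \cite[Lemma 3.6]{shubh-c20}, though this is not needed for the argument.)
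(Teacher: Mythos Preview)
Your proof is correct and follows essentially the same route as the paper's: both arguments use Remark~\ref{gamma-char} together with the bijectivity of $\chi^{(f)}$ to see that the block images $X_if$ cover all of $X$, whence $f$ is surjective and (by finiteness) bijective. The only difference is that the paper stops at surjectivity and leaves the passage from ``bijective selfmap in $T(X,\mathcal{P})$'' to ``unit of $T(X,\mathcal{P})$'' implicit, whereas you additionally verify explicitly that $f^{-1}$ maps blocks to blocks; this extra paragraph is harmless and makes the argument self-contained.
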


\begin{proof}
By hypothesis, it suffices to show that $f$ is surjective. Let $y\in X$. Then there exists $j\in I_m$ such that $y\in X_j$. Since the map $\chi^{(f)}$ is bijective, there exists $i\in I_m$ such that $i \chi^{(f)} = j$. Then, by Remark \ref{gamma-char}, it follows that $X_i f = X_j$ and so $y\in X_i f$. Hence $f$ is surjective. This completes the proof.
\end{proof}

If $X$ is an arbitrary set, the following example shows that the above Proposition \ref{Gamma-bij-char} need not be true.

\begin{example}\label{counter-01}
Let $X=\mathbb{Z}$. Consider the partition $\mathcal{P}=\{X_1, X_2\}$ of $X$, where $X_1$ and $X_2$ denote the set of nonnegative integers and the set of negative integers, respectively. Define a selfmap $f\colon X\to X$ by $xf=\lfloor \frac{x}{2} \rfloor$,  where $\lfloor x \rfloor$ denotes the greatest integer less than or equal to $x$.

\vspace{0.1cm}
Clearly $f \in T(X, \mathcal{P})$. Note that  $0f=1f=0$. It follows that $f$ is not injective and so $f\notin S(X,\mathcal{P})$. Observe that the map $\chi^{(f)}$ is bijective and so $f\in \Sigma(X, \mathcal{P})$ by \cite[Corollary 3.5]{shubh-c20}. One can also verify in a routine manner that $f\in \Gamma(X,\mathcal{P})$.
\end{example}

\vspace{0.1cm}

The following corollary is an immediate consequence of Proposition \ref{Gamma-bij-char} and \cite[Lemma 3.6(ii)]{shubh-c20}.
\begin{corollary}
Let $\mathcal{P} = \{X_i|\; i\in I_m\}$ be an $m$-partition of a finite set $X$, and let $f\in \Gamma(X, \mathcal{P})$ such that the map $\chi^{(f)}$ is bijective. If $X_i f = X_j$, then $|X_i| = |X_j|$.
\end{corollary}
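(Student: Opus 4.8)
The plan is to reduce the claim directly to the two cited results, observing that under the bijectivity hypothesis the map $f$ is in fact a unit of $T(X,\mathcal{P})$. First I would invoke Proposition \ref{Gamma-bij-char}: since $\mathcal{P}$ is an $m$-partition of the finite set $X$, $f\in\Gamma(X,\mathcal{P})$, and $\chi^{(f)}$ is bijective on $I_m$, that proposition yields at once $f\in S(X,\mathcal{P})$. Thus $f$ lies in the group of units of $T(X,\mathcal{P})$, which is the hypothesis needed to access the block-size property below.

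Next I would exploit the assumption $X_i f = X_j$. This equality is stronger than the containment $X_i f \subseteq X_j$ required by the block-preservation lemma, so in particular $X_i f \subseteq X_j$ holds. With $f\in S(X,\mathcal{P})$ now established and $X_i f \subseteq X_j$ in hand, I would apply \cite[Lemma 3.6(ii)]{shubh-c20}, exactly as it was used in the necessity part of the proof of Theorem \ref{unit-reg-txp}, to conclude $|X_i| = |X_j|$. This is precisely the asserted equality, completing the argument.

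There is essentially no obstacle in this proof: the only real content is recognizing that bijectivity of $\chi^{(f)}$ promotes $f$ from $\Gamma(X,\mathcal{P})$ into $S(X,\mathcal{P})$, after which the size equality is simply a property of units recorded in the cited lemma. I would point out that the finiteness of $X$ enters only indirectly, through the validity of Proposition \ref{Gamma-bij-char}, on which the whole reduction depends; once that proposition applies, the remaining step is a one-line invocation of \cite[Lemma 3.6(ii)]{shubh-c20}. This justifies the description of the statement as an immediate consequence of Proposition \ref{Gamma-bij-char} and \cite[Lemma 3.6(ii)]{shubh-c20}.
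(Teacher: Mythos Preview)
Your proposal is correct and follows exactly the approach indicated by the paper, which states that the corollary is an immediate consequence of Proposition~\ref{Gamma-bij-char} and \cite[Lemma 3.6(ii)]{shubh-c20}. Your write-up simply unpacks these two citations in the natural order, so there is nothing to add.
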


\vspace{0.1cm}

If $X$ is a finite set, we now prove that the intersection of the semigroups $\Gamma(X, \mathcal{P})$ and $\Sigma(X, \mathcal{P})$ is exactly the group of units of the semigroup $T(X, \mathcal{P})$ in the following proposition.
\begin{proposition}\label{gam-sig-perm}
Let $\mathcal{P} = \{X_i|\; i\in I_m\}$ be an $m$-partition of a finite set $X$. Then \[\Gamma(X, \mathcal{P}) \cap \Sigma(X, \mathcal{P}) = S(X, \mathcal{P}).\]
\end{proposition}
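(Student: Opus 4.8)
The plan is to prove the two set inclusions separately. The inclusion $\Gamma(X,\mathcal{P}) \cap \Sigma(X,\mathcal{P}) \subseteq S(X,\mathcal{P})$ is the interesting direction, but I expect it to follow almost mechanically from the machinery already assembled in this section, so the work is really just in invoking the right results in the right order.

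For the inclusion $S(X,\mathcal{P}) \subseteq \Gamma(X,\mathcal{P}) \cap \Sigma(X,\mathcal{P})$, I would proceed as follows. Lemma \ref{s-sub-gama} already gives $S(X,\mathcal{P}) \subseteq \Gamma(X,\mathcal{P})$, so it remains only to check that $S(X,\mathcal{P}) \subseteq \Sigma(X,\mathcal{P})$. Let $f \in S(X,\mathcal{P})$. Since $X$ is finite, the element $f$, being a unit of $T(X,\mathcal{P})$, is in particular a bijection of $X$, and hence $Xf = X$. Consequently $Xf \cap X_i = X_i \neq \emptyset$ for every block $X_i \in \mathcal{P}$, and since $f \in T(X,\mathcal{P})$ as well, this shows $f \in \Sigma(X,\mathcal{P})$. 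Combining the two containments gives $S(X,\mathcal{P}) \subseteq \Gamma(X,\mathcal{P}) \cap \Sigma(X,\mathcal{P})$.

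For the reverse inclusion, take $f \in \Gamma(X,\mathcal{P}) \cap \Sigma(X,\mathcal{P})$. Because $\mathcal{P}$ is an $m$-partition of the finite set $X$ and $f \in \Sigma(X,\mathcal{P})$, Corollary 3.5 of \cite{shubh-c20} ensures that the character $\chi^{(f)}$ is a bijection of $I_m$. Now $f$ belongs to $\Gamma(X,\mathcal{P})$ and its character $\chi^{(f)}$ is bijective, which is precisely the hypothesis of Proposition \ref{Gamma-bij-char}; applying that proposition yields $f \in S(X,\mathcal{P})$. This establishes $\Gamma(X,\mathcal{P}) \cap \Sigma(X,\mathcal{P}) \subseteq S(X,\mathcal{P})$ and hence the desired equality.

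The main obstacle, such as it is, lies entirely in the easy-looking inclusion $S(X,\mathcal{P}) \subseteq \Sigma(X,\mathcal{P})$: one must be sure that the group of units of $T(X,\mathcal{P})$ really consists of bijections of $X$, which is where the finiteness of $X$ is used (in an infinite setting a ``unit'' could fail to be surjective). Once this is in hand, the rest of the argument is a clean two-step bootstrap: membership in $\Sigma(X,\mathcal{P})$ upgrades the character $\chi^{(f)}$ to a bijection, and membership in $\Gamma(X,\mathcal{P})$ together with that bijectivity upgrades $f$ itself to a unit, so no genuinely new computation is needed beyond citing Corollary 3.5 of \cite{shubh-c20} and Proposition \ref{Gamma-bij-char}.
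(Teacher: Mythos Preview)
Your proof is correct and follows essentially the same route as the paper's: Lemma~\ref{s-sub-gama} plus surjectivity of a unit for one inclusion, and Corollary~3.5 of \cite{shubh-c20} together with Proposition~\ref{Gamma-bij-char} for the other.

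One small correction to your closing commentary: units of $T(X,\mathcal{P})$ are bijections of $X$ for \emph{any} $X$, finite or not, since a two-sided inverse in a transformation monoid forces bijectivity; so finiteness is not needed there. The genuine use of finiteness is in Proposition~\ref{Gamma-bij-char} (passing from bijectivity of $\chi^{(f)}$ to bijectivity of $f$), as Example~\ref{counter-01} illustrates.
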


\begin{proof}
Let $f\in S(X, \mathcal{P})$. Then we have $f \in \Gamma(X, \mathcal{P})$ by Lemma \ref{s-sub-gama}. Moreover, since $f$ is bijective, it follows that $Xf = X$. Therefore, $Xf \cap X_i \neq \emptyset$ for all $X_i \in \mathcal{P}$. Hence $f\in \Sigma(X, \mathcal{P})$ and consequently $S(X, \mathcal{P}) \subseteq \Gamma(X, \mathcal{P}) \cap \Sigma(X, \mathcal{P})$.

\vspace{0.1cm}
On the other hand, let $f \in \Gamma(X, \mathcal{P}) \cap \Sigma(X, \mathcal{P})$. Since $f \in \Sigma(X, \mathcal{P})$, the selfmap $\chi^{(f)}$ on $I_m$ is bijective by \cite[Corollary 3.5]{shubh-c20}. Moreover, since $f \in \Gamma(X, \mathcal{P})$, we have $f \in S(X, \mathcal{P})$ by Proposition \ref{Gamma-bij-char}. Hence $\Gamma(X, \mathcal{P}) \cap \Sigma(X, \mathcal{P}) \subseteq S(X, \mathcal{P})$. This completes the proof.
\end{proof}

If $X$ is an arbitrary set, the above Proposition \ref{gam-sig-perm} is, in general, not true as one can quickly see from Example \ref{counter-01}.

\section{Regular elements and Idempotents of $\Gamma(X, \mathcal{P})$}
In this section, we give a characterization of the regular elements as well as the idempotents of the semigroup $\Gamma(X, \mathcal{P})$ in the respective subsections. Let us first recall a definition and lemma from \cite{shubh-c20}.

\begin{definition}\cite[Definition 5.1]{shubh-c20}
Let $\mathcal{P}$ be a partition of a set $X$. A \emph{block map} is a map whose domain and codomain are the blocks of $\mathcal{P}$.
\end{definition}

\begin{lemma}\cite[Lemma 5.2]{shubh-c20}\label{fam-func}
Let $\mathcal{P} = \{X_i|\;i\in I\}$ be a partition of a set $X$, and let $f \in \mathcal{T}_X$. Then $f\in T(X,\mathcal{P})$ if and only if there exists a unique indexed family $B(f, I)$ of block maps induced by $f$, where \[B(f, I) = \{f_i |\;  f_i \mbox{ is induced by }f\mbox{and } \mbox{dom}(f_i) = X_i \mbox{ for each }i\in I\}.\]
\end{lemma}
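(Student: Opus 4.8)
The plan is to prove both directions of this biconditional by directly unpacking the three relevant definitions: membership in $T(X,\mathcal{P})$, the notion of a block map, and the phrase ``induced by.'' The single observation that drives the whole argument is that the blocks of $\mathcal{P}$ are pairwise disjoint, so a \emph{nonempty} subset of $X$ can be contained in at most one block. This is precisely what will force both the existence and the uniqueness of the target block attached to each $X_i$, and consequently of each block map $f_i$.

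For the forward implication I would assume $f \in T(X,\mathcal{P})$ and build the family $B(f,I)$ one block at a time. Fix $i \in I$. By the defining property of $T(X,\mathcal{P})$ there is some $X_j \in \mathcal{P}$ with $X_i f \subseteq X_j$; since $X_i$ is nonempty so is $X_i f$, and because distinct blocks are disjoint this $X_j$ is the \emph{unique} block containing $X_i f$. I would then set $f_i \colon X_i \to X_j$ by $x f_i = xf$ for all $x \in X_i$. This is a legitimate map exactly because $X_i f \subseteq X_j = \mbox{codom}(f_i)$, and both $\mbox{dom}(f_i) = X_i$ and $\mbox{codom}(f_i) = X_j$ are blocks, so $f_i$ is a block map induced by $f$ with the prescribed domain. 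Ranging over $i \in I$ produces the indexed family $B(f,I)$.

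The uniqueness is where the disjointness does its real work, and I would isolate it as the key step. Suppose $f_i'$ is any block map induced by $f$ with $\mbox{dom}(f_i') = X_i$ and codomain a block $X_{j'}$. Being induced by $f$, its rule is forced: $x f_i' = xf = x f_i$ for every $x \in X_i$, so $f_i$ and $f_i'$ agree pointwise. Moreover $X_i f = X_i f_i' \subseteq X_{j'}$, and since the block containing the nonempty set $X_i f$ is unique, we conclude $X_{j'} = X_j$; hence the codomains coincide and $f_i' = f_i$. Thus the indexed family is unique.

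For the converse I would assume that a family $B(f,I)$ as described exists and recover $f \in T(X,\mathcal{P})$ at once. For each $i \in I$ the block map $f_i$ has some block $X_j$ as codomain, and since $f_i$ is induced by $f$ we get $X_i f = X_i f_i \subseteq X_j$; as $i$ ranges over $I$ this says exactly that every block has its image contained in a block, i.e.\ $f \in T(X,\mathcal{P})$. I do not expect any genuine obstacle here: the lemma is essentially a reformulation of the definition, and the only point needing a touch of care is the uniqueness claim, which reduces entirely to the disjointness of the blocks of $\mathcal{P}$. In particular no finiteness or cardinality assumption on $X$ is required.
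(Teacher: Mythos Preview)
Your proof is correct. Note, however, that the paper does not supply its own proof of this lemma: it is quoted verbatim from \cite[Lemma 5.2]{shubh-c20} and stated without argument, so there is no in-paper proof to compare against. That said, your argument is exactly the natural one---the lemma is a repackaging of the definition of $T(X,\mathcal{P})$, and the only nontrivial point is uniqueness, which you correctly reduce to the disjointness of the blocks (so that the nonempty set $X_i f$ lies in a unique block). This is precisely how one would expect the original reference to proceed as well.
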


\vspace{0.1cm}
The next remark is simple.

\begin{remark}\label{gamma-block-surj}
If $f\in \Gamma(X, \mathcal{P})$, then each block map of the family $B(f, I)$ is surjective.
\end{remark}

\vspace{0.0cm}

\subsection{Regular elements}

In this subsection, we first give a characterization of the regular elements of the semigroup $\Gamma(X, \mathcal{P})$. We next observe that if $\mathcal{P}$ is a partition of a finite set $X$ containing at most two blocks, then $\Gamma(X, \mathcal{P})$ is a regular semigroup. For a finite set $X$, we also prove that every regular element of the semigroup $\Gamma(X, \mathcal{P})$ is unit-regular.

\vspace{0.2cm}
If $\mathcal{P}$ is a uniform partition of a finite set $X$, we know that the semigroup $\Gamma(X, \mathcal{P})$ is regular (cf. \cite[Theorem 4.1]{pei-s05}). However, for a non-uniform partition $\mathcal{P}$ of $X$, the semigroup $\Gamma(X, \mathcal{P})$ is, in general, not regular as the following example shows.

\begin{example}
Let $\mathcal{P} = \big\{\{1\}, \{2\}, \{3,4\}\big\}$ be a non-uniform partition of $X = \{1,2,3,4\}$. Define a selfmap $f\colon X \to X$ by
\[f = \left(
    \begin{array}{cccc}
      1 & 2 & 3 & 4 \\
      1 & 1 & 2 & 2 \\
    \end{array}\right).\]

It is clear that $f \in \Gamma(X, \mathcal{P})$. We now show that $f$ is irregular. Assume, to the contrary, that there exists $g\in \Gamma(X, \mathcal{P})$ such that $f = fgf$. Since $3f = 2$ and $f= fgf$, we then obtain
\[3(fgf) = 2\Longrightarrow (2g)f = 2.\]
It concludes that $2g = 3$ or $2g = 4$. But, in either case, the image of $\{2\}\in \mathcal{P}$ under $g$ does not belong to $\mathcal{P}$. This contradicts our assumption that $g\in \Gamma(X, \mathcal{P})$. Hence $f$ is an irregular element of $\Gamma(X, \mathcal{P})$.
\end{example}

\vspace{0.1cm}

The following theorem characterizes the regular elements of the semigroup $\Gamma(X, \mathcal{P})$ in terms of certain  block maps.
\begin{theorem}\label{Gamma-regulr}
Let $\mathcal{P} = \{X_i|\; i \in I\}$ be a partition of an arbitrary set $X$, and let $f\in \Gamma(X,\mathcal{P})$. Then $f$ is regular if and only if for every $j\in I\chi^{(f)}$ there exists $i\in I$ such that the block map $f_i\colon X_i\to X_j$ of the family $B(f, I)$ is injective.
\end{theorem}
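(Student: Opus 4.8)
The plan is to translate regularity of $f$, i.e.\ the existence of $g \in \Gamma(X,\mathcal{P})$ with $f = fgf$, into the stated condition on the block maps $f_i$ of the family $B(f, I)$, exploiting that each $f_i \colon X_i \to X_{i\chi^{(f)}}$ is already surjective by Remark~\ref{gamma-block-surj}. The crucial observation is that the equation $f = fgf$ forces $g$, restricted to each block indexed in the image of $\chi^{(f)}$, to be a two-sided inverse of a suitable block map of $f$. Since $X$ may be infinite, I cannot invoke cardinality and must instead produce genuine bijections from explicit inverse relations.

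For the necessity direction, I would suppose $f = fgf$ with $g \in \Gamma(X,\mathcal{P})$ and fix $j \in I\chi^{(f)}$. Then $X_j = X_\ell f \subseteq \mbox{ran}(f)$ for some $\ell \in I$, and I set $k = j\chi^{(g)}$, so that $X_j g = X_k$. First I would check that for each $y \in X_j$, writing $y = xf$, the identity $f = fgf$ gives $(yg)f = y$. Since $yg$ runs over all of $X_k$ (the block map $g_j \colon X_j \to X_k$ being surjective by Remark~\ref{gamma-block-surj}), this yields $X_k f = X_j$, hence $k\chi^{(f)} = j$, and shows $g_j f_k = \mathrm{id}_{X_j}$ for the block map $f_k \colon X_k \to X_j$. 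Combining $g_j f_k = \mathrm{id}_{X_j}$ with the surjectivity of $g_j$ makes $g_j$ a bijection, whence $f_k = g_j^{-1}$ is a bijection; in particular $f_k$ is injective, so $i = k$ witnesses the required condition.

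For the sufficiency direction, I would assume the condition and, for each $j \in I\chi^{(f)}$, choose an index $i(j)$ with $i(j)\chi^{(f)} = j$ and $f_{i(j)} \colon X_{i(j)} \to X_j$ bijective. Then I would define $g \in \mathcal{T}_X$ blockwise, letting $g$ agree with $f_{i(j)}^{-1} \colon X_j \to X_{i(j)}$ on $X_j$ when $j \in I\chi^{(f)}$, and with the identity map on $X_j$ otherwise. Each block of $\mathcal{P}$ is then mapped onto a block of $\mathcal{P}$, so $g \in \Gamma(X,\mathcal{P})$. To verify $f = fgf$, I would take $x \in X_i$ and set $j = i\chi^{(f)}$; then $xf \in X_j$, so $(xf)g = (xf)f_{i(j)}^{-1} \in X_{i(j)}$, and applying $f$ returns $xf$ because $f_{i(j)}^{-1} f_{i(j)} = \mathrm{id}_{X_j}$ and $xf \in X_j$. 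Hence $x(fgf) = xf$ for every $x \in X$, giving $f = fgf$.

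I expect the main obstacle to be the necessity direction: one must resist arguing by counting preimages and instead isolate the correct block $X_k = X_j g$ and extract the one-sided identity $g_j f_k = \mathrm{id}_{X_j}$. Getting the composition order right, and noticing that surjectivity of $g_j$ upgrades this one-sided identity to a full bijection (so that $f_k$ inherits injectivity as the inverse $g_j^{-1}$), is the delicate point that keeps the argument valid for an arbitrary, possibly infinite, set $X$.
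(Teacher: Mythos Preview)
Your proof is correct and tracks the paper's argument closely: both proofs single out the block $X_k = X_jg$ (the paper writes $X_l$) in the necessity direction, and for sufficiency both build $g$ blockwise from the inverses $f_{i(j)}^{-1}$ together with the identity on blocks outside $I\chi^{(f)}$. The one genuine difference is how injectivity of $f_k$ is extracted. The paper argues by contradiction: assuming distinct $x,y\in X_l$ with $xf=yf$, it lifts them through the surjection $g_j$ to distinct $x',y'\in X_j$ and checks that $f=fgf$ then fails at a preimage of $x'$ or $y'$. You instead read the identity $g_jf_k=\mathrm{id}_{X_j}$ directly off $f=fgf$, note that this forces $g_j$ to be injective, and combine with the surjectivity of $g_j$ (from $g\in\Gamma(X,\mathcal{P})$) to conclude $g_j$ is a bijection and $f_k=g_j^{-1}$. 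Your route is a little cleaner and makes explicit why one needs $g\in\Gamma(X,\mathcal{P})$ rather than merely $g\in T(X,\mathcal{P})$; the paper's contradiction argument is more elementary in flavour but arrives at the same witness $i=k$.
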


\begin{proof}
Suppose first that $f\in \Gamma(X,\mathcal{P})$ is regular. Let $j \in I\chi^{(f)}$. Then there exists $i\in I$ such that $i\chi^{(f)} = j$ and so $X_if = X_j$ by Remark \ref{gamma-char}. We then have a block map $f_i\colon X_i \to X_j$ in $B(f, I)$. If $f_i$ is injective, then we are done. Otherwise, suppose that $f_i$ is not injective.

\vspace{0.1cm}
Since $f\in \Gamma(X,\mathcal{P})$ is regular, there exists $g\in \Gamma(X,\mathcal{P})$ such that $f = fgf$. Moreover, since $j\in I\chi^{(f)} \subseteq I$, there exists $l\in I\chi^{(g)}$ such that $j\chi^{(g)} = l$ and so
$X_jg = X_l$ by Remark \ref{gamma-char}. We then have a block map $g_j \colon X_j \to X_l$ in $B(g, I)$.

\vspace{0.1cm}

Recall that $X_if=X_j$, $X_jg = X_l$, and $f= fgf$. We then obtain
\[X_j = X_i(fgf)=(X_i f)gf= (X_j g)f= X_l f,\]
and subsequently there is a block map $f_l \colon X_l \to X_j$ in $B(f, I)$. We now claim that the block map $f_l$ is injective.

\vspace{0.1cm}
Assume, to the contrary, that there exist two distinct elements $x, y \in X_l$ such that $xf=yf$. By Remark \ref{gamma-block-surj}, we know that the block map $g_j \colon X_j \to X_l$ is surjective. Therefore, for distinct elements $x$ and $y$ of $X_l$, there exist two distinct elements $x', y' \in X_j$ such that $x'g=x$ and $y'g=y$.
Write $xf= z \in X_j$. Since $x'$ and $y'$ are distinct elements of $X_j$, the element $z$ may be equal to at most one element of $\{x', y'\}$. We can then verify in a routine manner that $f \neq fgf$ which is a contradiction, and so the necessity follows.

\vspace{0.1cm}
Conversely, suppose that the condition holds and we need to find a map $g \in \Gamma(X, \mathcal{P})$ such that $f =fgf$. By Remark \ref{gamma-block-surj}, we know that every block map of $B(f, I)$ is surjective. Therefore, by hypothesis, for every $j\in I\chi^{(f)}$, there exists $i\in I$ such that the block map $f_i\colon X_i\to X_j$ is bijective. Denote by $h_j$ the inverse map of the bijective map $f_i\colon X_i \to X_j$. Note that the inverse map $h_j\colon X_j\to X_i$ is also bijective. Define a map $g\colon X\to X$ by
\begin{align*}
xg=
\begin{cases}
xh_j, & x\in X_j \mbox{ where } j\in I\chi^{(f)};\\
x, & \mbox{otherwise}.
\end{cases}
\end{align*}
It is clear that $g\in \Gamma(X,\mathcal{P})$. One can also verify in a routine manner that $f=fgf$ and so $f$ is a regular element of $\Gamma(X,\mathcal{P})$. This completes the proof.
\end{proof}

\begin{corollary}\label{unif-reg}
If $\mathcal{P} = \{X_i|\; i\in I_m\}$ is a uniform $m$-partition of a finite set $X$, then the semigroup $\Gamma(X,\mathcal{P})$ is regular.
\end{corollary}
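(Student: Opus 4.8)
The plan is to deduce this directly from the characterization in Theorem~\ref{Gamma-regulr}, exploiting the fact that uniformity forces every relevant block map induced by $f$ to be bijective. Since $\mathcal{P}$ is uniform, there is a common value $n$ with $|X_i| = n$ for all $i \in I_m$; as $X$ is finite, $n$ is a finite positive integer.

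First I would fix an arbitrary $f \in \Gamma(X,\mathcal{P})$ together with an arbitrary $j \in I_m\chi^{(f)}$, and choose $i \in I_m$ such that $i\chi^{(f)} = j$. By Remark~\ref{gamma-char} this gives $X_i f = X_j$, so the family $B(f, I_m)$ contains a block map $f_i \colon X_i \to X_j$, and by Remark~\ref{gamma-block-surj} this block map is surjective.

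The key step is then a pigeonhole observation: $f_i$ is a surjection between the finite sets $X_i$ and $X_j$, both of size $n$, so $f_i$ must be injective. Hence for the given $j$ there exists $i$ with the block map $f_i \colon X_i \to X_j$ injective, which is precisely the condition appearing in Theorem~\ref{Gamma-regulr}. Since $f$ and $j$ were arbitrary, every element of $\Gamma(X,\mathcal{P})$ satisfies that criterion and is therefore regular, so the semigroup $\Gamma(X,\mathcal{P})$ is regular.

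I do not anticipate a genuine obstacle: the whole argument is essentially a one-line consequence of Theorem~\ref{Gamma-regulr} once one notes that a surjection between equal-size finite sets is a bijection. The only points that require care are the explicit use of \emph{both} hypotheses — finiteness of $X$, to make the pigeonhole valid, and uniformity of $\mathcal{P}$, to guarantee $|X_i| = |X_j|$ — since dropping either would invalidate the equal-cardinality step and the conclusion could fail (as the earlier non-uniform example illustrates).
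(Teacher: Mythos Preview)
Your argument is correct and matches the paper's own proof essentially line for line: pick $j\in I_m\chi^{(f)}$, choose $i$ with $i\chi^{(f)}=j$, use Remark~\ref{gamma-char} and Remark~\ref{gamma-block-surj} to get a surjective block map $f_i\colon X_i\to X_j$, invoke $|X_i|=|X_j|$ and finiteness to conclude injectivity, and apply Theorem~\ref{Gamma-regulr}. The only cosmetic difference is that the paper cites \cite[Proposition~1.1.3]{gan-maz09} for the pigeonhole step rather than stating it in words.
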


\begin{proof}
Let $f\in \Gamma(X,\mathcal{P})$, and let $j\in I_m\chi^{(f)}$. Then there exists $i \in I_m$ such that $i\chi^{(f)} = j$ and so $X_if=X_j$ by Remark \ref{gamma-char}. We then have the surjective block map $f_i\colon X_i\to X_j$ in $B(f, I_m)$. By hypothesis, we know that $|X_i| = |X_j|$. It follows that $f_i$ is injective (cf. \cite[Proposition 1.1.3]{gan-maz09}). Then $f$ is a regular element of $\Gamma(X,\mathcal{P})$ by Theorem \ref{Gamma-regulr}. Hence, since $f\in \Gamma(X,\mathcal{P})$ is arbitrary, the semigroup $\Gamma(X,\mathcal{P})$ is regular.
\end{proof}

If the size of a partition $\mathcal{P}$ of a finite set $X$ is at most two, the following proposition proves that the semigroup $\Gamma(X, \mathcal{P})$ is regular.

\begin{proposition}
If $\mathcal{P}$ is a partition of a finite set $X$ such that $|\mathcal{P}| \le 2$, then the semigroup $\Gamma(X, \mathcal{P})$ is regular.
\end{proposition}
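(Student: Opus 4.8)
The plan is to verify the injective-block-map criterion of Theorem~\ref{Gamma-regulr} for every $f \in \Gamma(X,\mathcal{P})$, splitting the argument according to the number of blocks. If $|\mathcal{P}| = 1$, then $\mathcal{P}$ consists of a single block, and since $X$ is finite, $\Gamma(X,\mathcal{P})$ is the symmetric group on $X$ by the remark on trivial partitions; being a group, it is regular. If $|\mathcal{P}| = 2$ and $\mathcal{P}$ is uniform, then $\Gamma(X,\mathcal{P})$ is regular by Corollary~\ref{unif-reg}. Hence the only remaining case is a non-uniform $2$-partition $\mathcal{P} = \{X_1, X_2\}$, and without loss of generality I would assume $|X_1| < |X_2|$.

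The key observation in this case is that the smaller block must be fixed by the character and that its induced block map is bijective. Indeed, for any $f \in \Gamma(X,\mathcal{P})$ and any block $X_i$, we have $X_i f = X_{i\chi^{(f)}}$ by Remark~\ref{gamma-char}, so $|X_{i\chi^{(f)}}| \le |X_i|$. Applying this to $X_1$ and using that $X_1$ is the unique block of minimal size forces $1\chi^{(f)} = 1$; then the block map $f_1 \colon X_1 \to X_1$, being surjective by Remark~\ref{gamma-block-surj} between finite sets of equal size, is bijective, hence injective.

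Next I would dispose of the block $X_2$ according to the two possible values of $2\chi^{(f)}$. If $2\chi^{(f)} = 2$, then $I\chi^{(f)} = \{1,2\}$; the requirement of Theorem~\ref{Gamma-regulr} at $j=1$ is met by the injective $f_1$, and at $j=2$ by $f_2 \colon X_2 \to X_2$, which is again surjective between finite sets of equal size and thus injective. If instead $2\chi^{(f)} = 1$, then $I\chi^{(f)} = \{1\}$, and the sole requirement at $j=1$ is again met by the injective $f_1$. In either case the criterion of Theorem~\ref{Gamma-regulr} holds, so $f$ is regular; since $f$ was arbitrary, $\Gamma(X,\mathcal{P})$ is regular.

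I do not anticipate a genuine obstacle here: the whole argument reduces to the elementary fact that a surjection between finite sets of equal size is a bijection, together with the minimality of $X_1$ forcing $1\chi^{(f)} = 1$. The one point to state carefully is why a single injective block map onto $X_1$ suffices for Theorem~\ref{Gamma-regulr} --- namely that the criterion only demands the existence of \emph{one} injective block map over each index in $I\chi^{(f)}$, not over every preimage block --- which is precisely why the non-injectivity of $f_2 \colon X_2 \to X_1$ (when $2\chi^{(f)}=1$) does no harm.
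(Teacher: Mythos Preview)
Your proposal is correct and follows essentially the same route as the paper's own proof: handle $|\mathcal{P}|=1$ as the symmetric group, invoke Corollary~\ref{unif-reg} for the uniform $2$-block case, and in the non-uniform case use the minimality of $|X_1|$ to force $X_1 f = X_1$ and then verify the criterion of Theorem~\ref{Gamma-regulr} according to whether $2\chi^{(f)}$ equals $1$ or $2$. The only cosmetic difference is that the paper cites Proposition~\ref{image-size-equal} for $X_1 f = X_1$, whereas you rederive it directly from $|X_{i\chi^{(f)}}| \le |X_i|$; your exposition is otherwise slightly more explicit but substantively identical.
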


\begin{proof}
If $|\mathcal{P}|=1$, then it is clear that $\Gamma(X,\mathcal{P})$ is the symmetric group on $X$ and so $\Gamma(X,\mathcal{P})$ is regular. Therefore, suppose that $\mathcal{P}=\{X_1,X_2\}$. Without loss of generality, assume that $|X_1|\leq |X_2|$. If $|X_1|= |X_2|$, then $\mathcal{P}$ is a uniform partition and so the semigroup $\Gamma(X,\mathcal{P})$ is regular by Corollary \ref{unif-reg}. Otherwise, we have $|X_1|<|X_2|$.

\vspace{0.1cm}
Let $f\in \Gamma(X,\mathcal{P})$. By Proposition \ref{image-size-equal}, we must have  $X_1f=X_1$. If $I_2\chi^{(f)} = \{1\}$, then we are done by Theorem \ref{Gamma-regulr}. Otherwise, we have  $X_2 f = X_2$. Then each block map of $B(f, I)$ is injective and so $f$ is regular by Theorem \ref{Gamma-regulr}. Since $f$ is an arbitrary map of $\Gamma(X,\mathcal{P})$, the semigroup $\Gamma(X,\mathcal{P})$ is regular. This completes the proof.
\end{proof}

If $X$ is a finite set, the following proposition proves that every regular element of the semigroup $\Gamma(X,\mathcal{P})$ is unit-regular.
\begin{proposition}
If $\mathcal{P} = \{X_i|\; i\in I_m\}$ is an $m$-partition of a finite set $X$, then every regular element of the semigroup $\Gamma(X,\mathcal{P})$ is unit-regular.
\end{proposition}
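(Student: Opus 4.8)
The plan is to produce, for a given regular $f\in\Gamma(X,\mathcal{P})$, an explicit unit $g$ of $\Gamma(X,\mathcal{P})$ satisfying $f=fgf$. Since $X$ is finite and $S(X,\mathcal{P})\subseteq\Gamma(X,\mathcal{P})$ by Lemma \ref{s-sub-gama}, every element of $S(X,\mathcal{P})$ is invertible in $\Gamma(X,\mathcal{P})$ (its inverse again lies in $S(X,\mathcal{P})\subseteq\Gamma(X,\mathcal{P})$) and hence is a unit; so it suffices to find $g\in S(X,\mathcal{P})$ with $f=fgf$. Write $J=I_m\chi^{(f)}$ and note that $\mbox{ran}(f)=\bigcup_{j\in J}X_j$, because $X_i f=X_{i\chi^{(f)}}$ for each $i$ by Remark \ref{gamma-char}.

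First I would invoke the characterization of regularity. Since $f$ is regular, Theorem \ref{Gamma-regulr} together with Remark \ref{gamma-block-surj} yields, for every $j\in J$, an index $i(j)\in I_m$ for which the block map $f_{i(j)}\colon X_{i(j)}\to X_j$ is bijective; in particular $|X_{i(j)}|=|X_j|$. The assignment $j\mapsto i(j)$ is injective, since $i(j)\chi^{(f)}=j$ recovers $j$ from $i(j)$. Setting $I'=\{\,i(j)\mid j\in J\,\}$, I would then define $g$ on $\mbox{ran}(f)$ block by block by letting it restrict to the inverse bijection $(f_{i(j)})^{-1}\colon X_j\to X_{i(j)}$ on each $X_j$ with $j\in J$. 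This guarantees $(yg)f=y$ for every $y\in\mbox{ran}(f)$, which is exactly the condition forcing $f=fgf$ once $g$ is extended to all of $X$.

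The main obstacle is to extend this partial map to a genuine unit, that is, to an element of $S(X,\mathcal{P})$ rather than to the possibly non-bijective regular inverse built in the proof of Theorem \ref{Gamma-regulr}. So far $g$ bijects $\mbox{ran}(f)=\bigcup_{j\in J}X_j$ onto $\bigcup_{i\in I'}X_i$ in a partition-preserving way, and it remains to biject the leftover blocks $\{X_j\mid j\in I_m\setminus J\}$ onto the leftover blocks $\{X_i\mid i\in I_m\setminus I'\}$ while matching sizes. Here is the crux: because $j\mapsto i(j)$ is a size-preserving bijection from $J$ onto $I'$, the multiset of block sizes over $J$ equals that over $I'$; removing these equal multisets from the full multiset of block sizes over $I_m$ shows that the multiset of sizes over $I_m\setminus J$ equals the multiset of sizes over $I_m\setminus I'$. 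Hence there is a size-preserving bijection $\sigma\colon I_m\setminus J\to I_m\setminus I'$.

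Finally I would complete the definition of $g$ by choosing, for each $j\in I_m\setminus J$, an arbitrary bijection $X_j\to X_{\sigma(j)}$ and letting $g$ act by it on $X_j$. The two halves have disjoint domains ($\mbox{ran}(f)$ and its complement) and disjoint ranges ($\bigcup_{i\in I'}X_i$ and its complement), so $g$ is a bijection of $X$ sending each block onto a block of equal size; thus $g\in S(X,\mathcal{P})$. Since $(yg)f=y$ for all $y\in\mbox{ran}(f)$, we get $x(fgf)=xf$ for every $x\in X$, i.e.\ $f=fgf$, and $f$ is unit-regular. The only delicate point is the size-matching argument on the complementary blocks; the rest is the same bookkeeping already used in Theorem \ref{unit-reg-txp} and Theorem \ref{Gamma-regulr}.
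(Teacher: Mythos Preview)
Your argument is correct. The paper's proof is considerably shorter because it does not build the unit $g$ from scratch: it simply checks that the hypothesis of Theorem~\ref{unit-reg-txp} is satisfied. Concretely, given a regular $f\in\Gamma(X,\mathcal{P})$ and $j\in I_m\chi^{(f)}$, the paper uses Theorem~\ref{Gamma-regulr} and Remark~\ref{gamma-block-surj} exactly as you do to obtain a bijective block map $f_i\colon X_i\to X_j$, observes $|X_i|=|X_j|$ and $X_if=X_j=X_j\cap Xf$, and then invokes Theorem~\ref{unit-reg-txp} directly to conclude unit-regularity. Your route instead re-proves the relevant half of Theorem~\ref{unit-reg-txp} in this special setting, with the multiset argument on complementary block sizes replacing the case analysis in that theorem's proof. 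What your approach buys is a self-contained, explicit construction of the unit without appealing to the $T(X,\mathcal{P})$ characterization; what the paper's approach buys is brevity, since the heavy lifting has already been done in Theorem~\ref{unit-reg-txp}.
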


\begin{proof}
Let $f\in \Gamma(X,\mathcal{P})$ be a regular map. Let $j\in I_m\chi^{(f)}$. Then, by Theorem \ref{Gamma-regulr}, there exists $i\in I_m$ such that the block map $f_i \colon X_i \to X_j$ of $B(f, I)$ is injective. By Remark \ref{gamma-block-surj}, we know that every block map of the family $B(f, I)$ is surjective. It follows that the block map $f_i \in B(f, I)$ is bijective and so $|X_i| = |X_j|$. Moreover, $X_i f = X_i f_i = X_j = X_j \cap Xf$. Hence $f$ is unit-regular by Theorem \ref{unit-reg-txp}.
\end{proof}

\vspace{0.02cm}
\subsection{Idempotents}
The following theorem characterizes the idempotents of the semigroup $\Gamma(X, \mathcal{P})$ in terms of certain block maps.

\begin{theorem}\label{ido-fi-iden}
Let $\mathcal{P} = \{X_i|\; i\in I\}$ be a partition of an arbitrary set $X$, and let $f\in \Gamma(X, \mathcal{P})$. Then $f$ is an idempotent if and only if for each $i\in I\chi^{(f)}$ the block map $f_i\in B(f, I)$ is the identity map.
\end{theorem}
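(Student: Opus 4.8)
The plan is to reduce the whole statement to the standard fact recalled in Section~2 that a transformation $f \in \mathcal{T}_X$ is idempotent precisely when it fixes every point of its range $\mbox{ran}(f)$. The single key observation is that, for $f \in \Gamma(X,\mathcal{P})$, the range decomposes as a union of blocks indexed by the image of the character: using Remark~\ref{gamma-char} one has $\mbox{ran}(f) = Xf = \bigcup_{i\in I} X_i f = \bigcup_{i\in I} X_{i\chi^{(f)}} = \bigcup_{j \in I\chi^{(f)}} X_j$. Since the blocks are pairwise disjoint, $f$ fixes every point of $\mbox{ran}(f)$ if and only if it fixes every point of each block $X_j$ with $j \in I\chi^{(f)}$, i.e. if and only if each block map $f_j \in B(f,I)$ with $j \in I\chi^{(f)}$ is the identity map on $X_j$. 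This equivalence is essentially the entire content of the theorem, and both directions fall out of it.

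For the forward direction I would assume $f$ is idempotent and fix $j \in I\chi^{(f)}$. By the decomposition above $X_j \subseteq \mbox{ran}(f)$, so for any $x \in X_j$ we have $x \in \mbox{ran}(f)$, whence $xf = x$ because $f$ acts as the identity on its range. As $x \in X_j$ was arbitrary, the block map $f_j$ is the identity map on $X_j$.

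For the converse I would assume every $f_j$ with $j \in I\chi^{(f)}$ is the identity map, and take $z \in \mbox{ran}(f)$. Writing $z = xf$ with $x \in X_k$ for some $k \in I$, Remark~\ref{gamma-char} gives $z \in X_k f = X_{k\chi^{(f)}}$, and $k\chi^{(f)} \in I\chi^{(f)}$. Thus $z$ lies in a block on which the induced block map is the identity, so $zf = z f_{k\chi^{(f)}} = z$. Hence $f$ fixes every point of $\mbox{ran}(f)$ and is therefore idempotent.

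I do not anticipate any serious obstacle here: once the range is recognized as the union $\bigcup_{j\in I\chi^{(f)}} X_j$ of image-blocks, both implications follow immediately from the characterization of idempotents in $\mathcal{T}_X$ together with Remark~\ref{gamma-char}. The only point requiring a little care is to keep the quantifier over $j \in I\chi^{(f)}$ aligned with this decomposition; notably, the surjectivity of block maps (Remark~\ref{gamma-block-surj}) is \emph{not} needed for this statement, only the identification of $\mbox{ran}(f)$ with the union of the blocks indexed by the image of $\chi^{(f)}$.
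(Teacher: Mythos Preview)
Your proof is correct and shares the same core idea as the paper's: both directions ultimately rest on the fact that $f\in\mathcal{T}_X$ is idempotent if and only if it fixes its range pointwise, together with the decomposition $Xf=\bigcup_{j\in I\chi^{(f)}}X_j$. The converse directions are essentially identical. In the forward direction the paper takes a slightly longer route: it first invokes \cite[Proposition~5.4]{shubh-c20} to deduce that each $f_i$ with $i\in I\chi^{(f)}$ is an idempotent block map, and then appeals to surjectivity (Remark~\ref{gamma-block-surj}) to upgrade this to the identity. Your argument is more self-contained, going straight from ``$f$ fixes $Xf$'' to ``$f$ fixes each $X_j$ with $j\in I\chi^{(f)}$'' via the range decomposition, which avoids the external citation. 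One small caveat: your closing remark that Remark~\ref{gamma-block-surj} is not needed is true as stated, but the same content (that $X_if$ is the \emph{entire} block $X_{i\chi^{(f)}}$) is exactly what you use through Remark~\ref{gamma-char} to get $X_j\subseteq Xf$, so surjectivity of the block maps is still doing the work.
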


\begin{proof}
Suppose first that $f\in \Gamma(X, \mathcal{P})$ is an idempotent. Let $i\in I \chi^{(f)}$. Since $\Gamma(X, \mathcal{P}) \subseteq T(X, \mathcal{P})$, the block map $f_i\in B(f, I)$ is an idempotent by \cite[Proposition 5.4]{shubh-c20}. By Remark \ref{gamma-block-surj}, the block map $f_i\in B(f, I)$ is a surjection. Note that any idempotent map on a set acts as the identity map on its image set (cf. \cite[p.6]{clifford61}).
Combining these, we conclude that $f_i \in B(f, I)$ is the identity map. Since $i\in I \chi^{(f)}$ is an arbitrary element, this completes the proof of the necessity part.

\vspace{0.1cm}
Conversely, suppose that the condition holds and we need to show that $f$ is an idempotent. It suffices to show that $f$ is the identity map on its image set $Xf$ (cf. \cite[p.6]{clifford61}). Let $x\in Xf$. Then there exists  $X_j \in \mathcal{P}$ such that $x\in X_j$. Clearly $j \in I \chi^{(f)}$. By hypothesis, the block map $f_j \in B(f, I)$ is the identity map. Hence  $xf = xf_j = x$. Since $x\in Xf$ is an arbitrary element, it follows that $f$ is the identity map on the image set $Xf$. This completes the proof.
\end{proof}

\section{The cardinality of $\Gamma(X, \mathcal{P})$, $E(\Gamma(X, \mathcal{P}))$, and $\mbox{Reg}(\Gamma(X, \mathcal{P}))$}
Throughout this section, $X$ is a finite set and $\mathcal{P}$ is an $(m, k)$-partition of $X$, where $m, k \in \mathbb{N}$ with $m \ge k$. Moreover, $\mathcal{P}$ has $m_i$ blocks of size $n_i$ for each $i\in I_k$ and $n_1 < n_2< \cdots < n_k$. Thus, $m = m_1 + m_2 + \ldots + m_k$.

\vspace{0.1cm}
The aim of this section is to count the number of elements, idempotents, and regular elements of the semigroup $\Gamma(X, \mathcal{P})$ for a finite set $X$ in the respective subsections. Before we calculate these, we state a remark and prove a simple lemma.

\vspace{0.1cm}
We immediately state the following from Remark \ref{gamma-block-surj}.
\begin{remark}\label{dom-ran-size}
Let $\mathcal{P} = \{X_i|\; i\in I_m\}$ be a partition of a finite set $X$, and let $f\in \Gamma(X, \mathcal{P})$. If $f_i \in B(f, I_m)$, then $|\mbox{ran}(f_i)| \le |\mbox{dom}(f_i)|$ for all $i\in I_m$.
\end{remark}

\vspace{0.01cm}
\begin{lemma}\label{size-r-subpart}
Let $\mathcal{P}$ be an $(m, k)$-partition of a finite set $X$, and let $r\in I_m$. Let $\mathfrak{F}_r$ be the collection of all $r$-subpartitions of $\mathcal{P}$ such that each $r$-subpartition in $\mathfrak{F}_r$ contains at least one block of size $n_1$. Then \[|\mathfrak{F}_r|=\sum_{l=1}^{\min\{m_1, r\}}{m_1\choose l}{m-l\choose r-l},\] where $m_1$ is the number of blocks in $\mathcal{P}$ of size $n_1$.
\end{lemma}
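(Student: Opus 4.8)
The plan is to enumerate $\mathfrak{F}_r$ by sorting its members according to the number $l$ of their blocks that have the smallest size $n_1$. Every $r$-subpartition in $\mathfrak{F}_r$ is required to contain at least one block of size $n_1$, so $l\ge 1$; and since $\mathcal{P}$ has only $m_1$ blocks of size $n_1$ while an $r$-subpartition consists of exactly $r$ blocks, we also have $l\le m_1$ and $l\le r$. Thus $l$ runs over $1\le l\le\min\{m_1,r\}$, which is precisely the range of summation appearing in the claimed formula.

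For a fixed admissible $l$, I would count the $r$-subpartitions having exactly $l$ blocks of size $n_1$ by two independent selections: first choose these $l$ blocks from among the $m_1$ available blocks of size $n_1$, in $\binom{m_1}{l}$ ways; then choose the remaining $r-l$ blocks of the subpartition from the $m-m_1$ blocks of $\mathcal{P}$ whose size differs from $n_1$, in $\binom{m-m_1}{r-l}$ ways. Hence this class has exactly $\binom{m_1}{l}\binom{m-m_1}{r-l}$ members. Since the number of size-$n_1$ blocks is a well-defined invariant of a subpartition, the classes for distinct values of $l$ are pairwise disjoint and together exhaust $\mathfrak{F}_r$; summing their sizes over $l$ from $1$ to $\min\{m_1,r\}$ then yields $|\mathfrak{F}_r|$.

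A quick consistency check is available through complementary counting: $|\mathfrak{F}_r|=\binom{m}{r}-\binom{m-m_1}{r}$, the number of all $r$-subpartitions minus those that use none of the $m_1$ smallest blocks, and expanding $\binom{m}{r}$ by Vandermonde's identity reproduces the same sum. One may instead apply inclusion--exclusion to the $m_1$ events ``a prescribed block of size $n_1$ lies in the subpartition'', obtaining the related expression $\sum_{l=1}^{\min\{m_1,r\}}(-1)^{l+1}\binom{m_1}{l}\binom{m-l}{r-l}$, in which the $l$-fold intersection term $\binom{m-l}{r-l}$ counts the $r$-subpartitions containing a fixed set of $l$ small blocks. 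As the whole argument is elementary double counting, I do not anticipate a genuine obstacle; the one point that demands care is the bookkeeping around the index $l$ --- correctly excluding $l=0$ so as to enforce the ``at least one'' requirement, fixing the upper cutoff at $\min\{m_1,r\}$, and being precise about the pool from which the remaining $r-l$ blocks are drawn, since it is exactly here that the two equivalent reformulations above diverge.
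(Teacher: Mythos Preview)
Your counting argument is sound, but notice that it does \emph{not} yield the formula printed in the lemma. You obtain
\[
|\mathfrak{F}_r|=\sum_{l=1}^{\min\{m_1,r\}}\binom{m_1}{l}\binom{m-m_1}{\,r-l\,},
\]
with $m-m_1$ in the second binomial, whereas the lemma (and the paper's proof) has $m-l$ there. These are genuinely different numbers: for $m=3$, $m_1=2$, $r=2$ one checks directly that $|\mathfrak{F}_r|=3$ (every $2$-subset of the three blocks meets the two small ones); your sum gives $\binom{2}{1}\binom{1}{1}+\binom{2}{2}\binom{1}{0}=3$, while the paper's sum gives $\binom{2}{1}\binom{2}{1}+\binom{2}{2}\binom{1}{0}=5$.

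In fact the paper's proof commits exactly the bookkeeping slip you warn against in your final sentence: it asserts that $\binom{m_1}{l}\binom{m-l}{r-l}$ is the number of $r$-subpartitions containing \emph{exactly} $l$ blocks of size $n_1$, but the second factor draws the remaining $r-l$ blocks from all $m-l$ leftover blocks, $m_1-l$ of which are still of size $n_1$. What is really being counted is the number of $r$-subpartitions containing a \emph{prescribed} $l$-set of small blocks; summing over those $l$-sets then overcounts each subpartition having $l'$ small blocks a total of $\sum_{l=1}^{l'}\binom{l'}{l}=2^{l'}-1$ times. Your expression with $\binom{m-m_1}{r-l}$ --- equivalently the complementary count $\binom{m}{r}-\binom{m-m_1}{r}$ --- is the correct value of $|\mathfrak{F}_r|$; the inclusion--exclusion variant $\sum_{l\ge 1}(-1)^{l+1}\binom{m_1}{l}\binom{m-l}{r-l}$ you mention is also correct, but it is \emph{not} the paper's formula since the alternating sign is essential. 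So your approach matches the paper's in outline (stratify by the number of small blocks) but is carried out correctly; the formula in the stated lemma is itself in error.
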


\begin{proof}
Let $l\in \mathbb{N}$ with $1\le l \le m_1$. Then the number of $r$-subpartitions of $\mathcal{P}$ which contain exactly $l$ blocks of the smallest size is
\[{m_1\choose l}{m-l\choose r-l}.\]
Since $l$ is arbitrary, by the addition principle, we get
\[|\mathfrak{F}_r| = \sum_{l=1}^{\min\{m_1, r\}}{m_1\choose l}{m-l\choose r-l}\]
and hence the proof is complete.
\end{proof}

\begin{notation}
For $r\in I_m$, we denote by $\mu_r$ the size of the collection $\mathfrak{F}_r$ obtained in Lemma \ref{size-r-subpart}. Moreover, we let $\mathfrak{F}_r = \{\mathcal{Q}_{r1}, \ldots, \mathcal{Q}_{r\mu_r}\}$, where each $\mathcal{Q}_{rt} \in \mathfrak{F}_r$ has $r_{t_i}\ge 0$ blocks of size $n_i$.
\end{notation}


\vspace{0.0cm}
\subsection{The cardinality of $\Gamma(X, \mathcal{P})$}

The following theorem counts the number of elements of the semigroup $\Gamma(X,\mathcal{P})$.

\begin{theorem}\label{size-gamma}
Let $\mathcal{P}$ be an $(m, k)$-partition of a finite set $X$. Then
\[|\Gamma(X,\mathcal{P})|= \prod_{i=1}^k\Big(\sum_{j=1}^im_j (n_j!) S(n_i, n_j)\Big)^{m_i}.\]
\end{theorem}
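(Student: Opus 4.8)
The plan is to reduce the count to an independent choice of a surjective block map on each block of $\mathcal{P}$. By Lemma \ref{fam-func}, the assignment $f\mapsto B(f,I_m)$ is a bijection from $T(X,\mathcal{P})$ onto the collection of all families of block maps $\{f_i\}$ with $\mbox{dom}(f_i)=X_i$ (each $f_i$ having some block as codomain), since a selfmap lies in $T(X,\mathcal{P})$ exactly when its restriction to every block is a block map, and conversely any such family glues to a well-defined element of $T(X,\mathcal{P})$. Restricting to $\Gamma(X,\mathcal{P})$, Remark \ref{gamma-block-surj} shows that $f\in\Gamma(X,\mathcal{P})$ precisely when each $f_i$ is a surjection of $X_i$ onto its codomain block. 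Hence $f\mapsto B(f,I_m)$ restricts to a bijection between $\Gamma(X,\mathcal{P})$ and the set of families in which, for each $i$, $f_i$ is a surjective map from $X_i$ onto some block of $\mathcal{P}$. Because the blocks partition $X$, these choices are mutually independent, so by the multiplication principle $|\Gamma(X,\mathcal{P})|$ equals the product over all blocks $X_a$ of the number of surjective maps from $X_a$ onto some block of $\mathcal{P}$.

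Next I would count, for a single block $X_a$ of size $n_i$, the number $N_i$ of surjective maps from $X_a$ onto some block of $\mathcal{P}$. A surjection from an $n_i$-set onto an $n_j$-set exists only when $n_j\le n_i$, that is (since $n_1<\cdots<n_k$) only when $j\le i$; in that case the number of surjections onto a fixed block of size $n_j$ is $n_j!\,S(n_i,n_j)$, using the standard count of surjections recalled in Section $2$. As $\mathcal{P}$ has exactly $m_j$ blocks of size $n_j$, summing over all admissible target sizes gives
\[N_i=\sum_{j=1}^{i} m_j\,(n_j!)\,S(n_i,n_j).\]

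Finally, since $N_i$ depends only on the size $n_i$ of the source block and $\mathcal{P}$ has exactly $m_i$ blocks of size $n_i$, grouping the factors of the product from the first step according to block size yields
\[|\Gamma(X,\mathcal{P})|=\prod_{i=1}^{k} N_i^{\,m_i}=\prod_{i=1}^{k}\Big(\sum_{j=1}^{i} m_j\,(n_j!)\,S(n_i,n_j)\Big)^{m_i},\]
as claimed. I do not expect a genuine obstacle in this argument; the only point requiring care is the justification that $f\mapsto B(f,I_m)$ is an honest bijection onto families of surjective block maps, so that the choices on distinct blocks are independent and the multiplication principle applies cleanly. This is exactly what Lemma \ref{fam-func} together with Remark \ref{gamma-block-surj} provides, and the restriction of the inner sum to $j\le i$ simply records that a smaller block cannot be the surjective image of the one being mapped.
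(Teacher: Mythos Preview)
Your proof is correct and follows essentially the same approach as the paper: both arguments use Lemma~\ref{fam-func} and Remark~\ref{gamma-block-surj} to identify $\Gamma(X,\mathcal{P})$ with families of surjective block maps, count the surjections out of a single block of size $n_i$ as $\sum_{j=1}^i m_j(n_j!)S(n_i,n_j)$, and then group by block size via the multiplication principle. The only minor remark is that Remark~\ref{gamma-block-surj} literally states only the forward implication, but the converse you use (a family of surjective block maps glues to an element of $\Gamma(X,\mathcal{P})$) is immediate from the definition, so no gap arises.
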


\begin{proof}
From Lemma \ref{fam-func} and Remark \ref{gamma-block-surj}, we know that each map $f \in \Gamma(X,\mathcal{P})$ is uniquely determined by the $m$-family $B(f, I_m)$ of surjective block maps. Therefore, it suffices to count the total number of such possible $m$-families $B(f,I_m)$ of surjective block maps. Since $\mathcal{P}$ has $k$ different size blocks, we break up the problem into $k$ subfamilies of surjective block maps according to their domain size .

\vspace{0.1cm}
Let $i\in I_k$. Since $\mathcal{P}$ has $m_i$ blocks of size $n_i$, we begin by counting the number of $m_i$-subfamilies of surjective block maps from $m_i$ distinct blocks of size $n_i$. By Remark \ref{dom-ran-size}, we can easily observe that the number of surjective block maps from any fixed $n_i$-element block is $\sum_{j=1}^im_j (n_j!)S(n_i,n_j)$. Therefore, by the multiplication principle, the number of possible $m_i$-subfamilies of surjective block maps from $m_i$ distinct blocks of size $n_i$ is $\Big(\sum_{j=1}^im_j (n_j!)S(n_i,n_j)\Big)^{m_i}$.

\vspace{0.1cm}
Since $i\in I_k$ is an arbitrary element, the total number of possible $m$-families of surjective block maps is
\[ \prod_{i=1}^k\Big(\sum_{j=1}^im_j (n_j!) S(n_i, n_j)\Big)^{m_i}\]
by the multiplication principle. This completes the proof.
\end{proof}

\vspace{0.2cm}
\subsection{The cardinality of $E(\Gamma(X, \mathcal{P}))$}
In this subsection, we count the number of idempotents of the semigroup $\Gamma(X, \mathcal{P})$. We begin by proving the following lemma.

\vspace{0.1cm}
\begin{lemma}\label{idem-lemma}
Let $\mathcal{P}$ be an $(m, k)$-partition of a finite set $X$, and let $\mathcal{Q}$ be an $r$-subpartition of $\mathcal{P}$ containing at least one block of size $n_1$. Let \[A_{\mathcal{Q}} = \{f \in \Gamma(X,\mathcal{P})|\; Xf  \mbox{ is the union of all blocks of }\mathcal{Q}\}.\]
Then \[|E(A_{\mathcal{Q}})|=\prod_{i=1}^k\Big( \displaystyle\sum_{j = 1}^i r_j (n_j!) S(n_i, n_j)\Big)^{m_i-r_i},\]
where $r_i\ge 0$ is the number of blocks in $\mathcal{Q}$ of size $n_i$ for each $i \in I_k$.
\end{lemma}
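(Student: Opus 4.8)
The plan is to reduce the enumeration to a block-by-block choice of block maps, using the idempotent criterion of Theorem \ref{ido-fi-iden} together with the block-map decomposition of Lemma \ref{fam-func}. Write $J \subseteq I_m$ for the set of indices of the blocks comprising $\mathcal{Q}$, so that $J$ contains exactly $r_i$ indices of blocks of size $n_i$ for each $i \in I_k$.

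First I would pin down the structure forced on an idempotent $f \in A_{\mathcal{Q}}$. For any $f \in \Gamma(X,\mathcal{P})$ one has $Xf = \bigcup_{p \in I_m} X_p f = \bigcup_{p \in I_m} X_{p\chi^{(f)}}$ by Remark \ref{gamma-char}; since the blocks of $\mathcal{P}$ are pairwise disjoint, the requirement $f \in A_{\mathcal{Q}}$ is equivalent to $I_m\chi^{(f)} = J$. Granting this, Theorem \ref{ido-fi-iden} says that $f$ is an idempotent precisely when the block map $f_p \in B(f, I_m)$ is the identity on $X_p$ for every $p \in J$. Conversely, any $f \in \Gamma(X,\mathcal{P})$ whose block maps satisfy $f_p = \mathrm{id}_{X_p}$ for all $p \in J$ and send every block with index outside $J$ onto some block of $\mathcal{Q}$ lies in $E(A_{\mathcal{Q}})$: the identity blocks give $J \subseteq I_m\chi^{(f)}$, the mapping condition gives $I_m\chi^{(f)} \subseteq J$, and the criterion of Theorem \ref{ido-fi-iden} is then met.

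This reduces the count to a choice of block maps, which may be made independently over the blocks by Lemma \ref{fam-func}. For each block $X_p$ with $p \in J$ the map $f_p$ is forced to be $\mathrm{id}_{X_p}$, contributing a single choice. For a block $X_p$ of size $n_i$ with $p \notin J$, the map $f_p$ must be a surjective block map (Remark \ref{gamma-block-surj}) onto a block of $\mathcal{Q}$; since a surjection exists only onto a block no larger than $X_p$, the target has size $n_j$ for some $j \le i$, and there are $r_j$ such blocks in $\mathcal{Q}$ and $(n_j!)S(n_i, n_j)$ surjections onto each, giving $\sum_{j=1}^i r_j (n_j!) S(n_i, n_j)$ choices for $f_p$. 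As $\mathcal{P}$ has $m_i$ blocks of size $n_i$ of which $r_i$ belong to $\mathcal{Q}$, exactly $m_i - r_i$ blocks of size $n_i$ have index outside $J$, and the multiplication principle over all blocks yields $\prod_{i=1}^k \big(\sum_{j=1}^i r_j (n_j!) S(n_i, n_j)\big)^{m_i - r_i}$, as claimed.

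The step needing the most care, rather than a genuine obstacle, is the equivalence between $f \in A_{\mathcal{Q}}$ and $I_m\chi^{(f)} = J$, which leans on disjointness of the blocks to pass from equality of unions to equality of index sets. It is also worth noting that the hypothesis that $\mathcal{Q}$ contains a block of size $n_1$ is exactly what guarantees $E(A_{\mathcal{Q}}) \neq \emptyset$: it forces $r_1 \ge 1$, so the smallest blocks lying outside $\mathcal{Q}$ admit a size-$n_1$ target and the factor $\big(r_1 (n_1!)\big)^{m_1 - r_1}$ for $i = 1$ is positive.
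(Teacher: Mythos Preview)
Your proof is correct and follows essentially the same route as the paper's: both use the block-map decomposition (Lemma~\ref{fam-func}), apply the idempotent criterion of Theorem~\ref{ido-fi-iden} to force identity maps on the blocks of $\mathcal{Q}$, and then count surjective block maps from the remaining $m_i - r_i$ blocks of each size $n_i$ onto blocks of $\mathcal{Q}$ of size at most $n_i$. Your treatment is slightly more explicit in verifying the equivalence $f \in A_{\mathcal{Q}} \Leftrightarrow I_m\chi^{(f)} = J$ and in spelling out the converse direction, and your closing remark on why the $n_1$-block hypothesis ensures $E(A_{\mathcal{Q}}) \neq \emptyset$ is a nice addition not in the paper.
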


\begin{proof}
From Lemma \ref{fam-func} and Remark \ref{gamma-block-surj}, we know that each idempotent $f \in E(A_{\mathcal{Q}})$ is uniquely determined by the $m$-family $B(f, I_m)$ of surjective block maps. Further, from Theorem \ref{ido-fi-iden}, we know that a map $f\in A_{\mathcal{Q}}$ is idempotent if and only if each block map $f_i\in B(f, I_m)$ with $\mbox{dom}(f_i) \subseteq Xf$ is the identity map. Since $\mathcal{Q}$ is an $r$-subpartition of $\mathcal{P}$, it suffices to count the total number of such possible $(m-r)$-families $B(f, I_m)$ of surjective block maps from $(m-r)$ distinct blocks of $\mathcal{P}\setminus \mathcal{Q}$. To count it, we break up the problem into $k$ subfamilies of surjective block maps according to their domain size. Note that $r = r_1+\cdots + r_k$, $m =m_1 + \cdots + m_k$, and $m-r = (m_1 - r_1) + \cdots + (m_k - r_k)$.

\vspace{0.1cm}
Let $i\in I_k$. Since $\mathcal{P}\setminus \mathcal{Q}$ has $(m_i-r_i)$ blocks of size $n_i$, we begin by counting the number of possible $(m_i-r_i)$-subfamilies of surjective block maps from these $(m_i-r_i)$ distinct blocks of size $n_i$. By Remark \ref{dom-ran-size}, we can easily observe that the number of surjective block maps from any fixed $n_i$-element block of $\mathcal{P}\setminus \mathcal{Q}$ is $\sum_{j = 1}^i r_j (n_j!) S(n_i, n_j)$.

\vspace{0.1cm}
Recall that $\mathcal{P}\setminus \mathcal{Q}$ has $(m_i-r_i)$ blocks of size $n_i$. By the multiplication principle, the number of possible $(m_i-r_i)$-subfamilies of surjective block maps from $(m_i-r_i)$ distinct blocks in $\mathcal{P}\setminus \mathcal{Q}$ of size $n_i$ is $\Big(\sum_{j = 1}^i r_j (n_j!) S(n_i, n_j)\Big)^{m_i-r_i}$.

\vspace{0.1cm}
Since $\mathcal{P}$ has $k$ different size blocks and $i\in I_k$ is an arbitrary element, by the multiplication principle, we can get the desired number of $(m-r)$-families of surjective block maps. This completes the proof.
\end{proof}

\vspace{0.2cm}
The following theorem counts the number of elements of the set $E(\Gamma(X,\mathcal{P}))$.

\begin{theorem}
Let $\mathcal{P}$ be an $(m, k)$-partition of a finite set $X$. Then
\[|E(\Gamma(X,\mathcal{P}))|=\sum_{r=1}^m\sum_{t= 1}^{\mu_r}\prod_{i=1}^k\Big( \displaystyle\sum_{j = 1}^i r_{t_j} (n_j!) S(n_i, n_j)\Big)^{m_i-r_{t_i}},\]

where $r_{t_i}\ge 0$ is the number of blocks in $\mathcal{Q}_{rt} \in \mathfrak{F}_r$ of size $n_i$ for each $i \in I_k$.
\end{theorem}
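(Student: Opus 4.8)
The plan is to partition the idempotent set $E(\Gamma(X,\mathcal{P}))$ according to the image of each idempotent, count each piece with Lemma \ref{idem-lemma}, and add the results by the addition principle. First I would observe that for any $f\in\Gamma(X,\mathcal{P})$ the image $Xf=\bigcup_{i\in I_m}X_if$ is a union of blocks of $\mathcal{P}$, because $X_if\in\mathcal{P}$ for every $i$. Hence there is a unique subpartition $\mathcal{Q}$ of $\mathcal{P}$ with $Xf=\bigcup\mathcal{Q}$; writing $r=|\mathcal{Q}|$, this says precisely that $f\in A_{\mathcal{Q}}$, where $A_{\mathcal{Q}}$ is the set introduced in Lemma \ref{idem-lemma}.

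The decisive point is that the image of every idempotent contains at least one block of the smallest size $n_1$, so that $\mathcal{Q}$ belongs to $\mathfrak{F}_r$. To see this, pick a block $X_i$ with $|X_i|=n_1$. By Remark \ref{dom-ran-size} we have $|X_if|\le|X_i|=n_1$, while $X_if\in\mathcal{P}$ forces $|X_if|\ge n_1$; therefore $X_if$ is a block of size $n_1$ contained in $Xf$. Consequently $\mathcal{Q}$ contains at least one block of size $n_1$, and so $\mathcal{Q}=\mathcal{Q}_{rt}$ for some $r\in I_m$ and some $t\in\{1,\ldots,\mu_r\}$.

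Next I would record that distinct subpartitions have distinct unions, so each idempotent lies in exactly one $A_{\mathcal{Q}}$; hence the sets $E(A_{\mathcal{Q}_{rt}})$ are pairwise disjoint, and by the first two paragraphs their union over all $r\in I_m$ and $t\in\{1,\ldots,\mu_r\}$ is exactly $E(\Gamma(X,\mathcal{P}))$. Applying the addition principle and substituting the value of $|E(A_{\mathcal{Q}_{rt}})|$ from Lemma \ref{idem-lemma} — with $r_{t_i}$ the number of blocks of $\mathcal{Q}_{rt}$ of size $n_i$ — yields the stated formula immediately.

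The step I expect to require the most care is establishing that the image of each idempotent must meet the smallest block size, so that $\mathcal{Q}$ always lies in some $\mathfrak{F}_r$, together with verifying that the pieces $E(A_{\mathcal{Q}_{rt}})$ genuinely partition $E(\Gamma(X,\mathcal{P}))$; once this indexing is in place, the numerical formula follows from Lemma \ref{idem-lemma} with no further computation.
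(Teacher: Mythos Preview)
Your proposal is correct and follows essentially the same approach as the paper: decompose $E(\Gamma(X,\mathcal{P}))$ according to the subpartition $\mathcal{Q}$ whose union is $Xf$, invoke Lemma~\ref{idem-lemma} on each piece, and sum. If anything, you are more explicit than the paper in justifying why the image of every $f\in\Gamma(X,\mathcal{P})$ must contain a block of size $n_1$ (so that $\mathcal{Q}\in\mathfrak{F}_r$) and why the pieces $E(A_{\mathcal{Q}_{rt}})$ genuinely partition $E(\Gamma(X,\mathcal{P}))$, both of which the paper simply asserts as observations.
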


\begin{proof}
Let $r\in I_m$, and let \[A_r = \{f \in \Gamma(X,\mathcal{P})|\; \rank\chi^{(f)} = r\}.\]
We observe that $E(A_r) = \sum E(A_{\mathcal{Q}_{rt}})$, where the sum runs over all $r$-subpartitions $\mathcal{Q}_{rt} \in \mathfrak{F}_r$ and
\[A_{\mathcal{Q}_{rt}} = \{f \in \Gamma(X,\mathcal{P})|\; Xf  \mbox{ is the union of all blocks of }\mathcal{Q}_{rt}\}.\]

\vspace{0.1cm}
Note that $|\mathfrak{F}_r| = \mu_r$. Therefore,
\begin{align*}
|E(A_r)| &= \sum_{t= 1}^{\mu_r} E(A_{\mathcal{Q}_{rt}})\\
&= \sum_{t= 1}^{\mu_r}\prod_{i=1}^k\Big( \displaystyle\sum_{j = 1}^i r_{t_j} (n_j!) S(n_i, n_j)\Big)^{m_i-r_{t_i}} \hspace{0.5cm}\mbox{ by Lemma \ref{idem-lemma}}.
\end{align*}

Since $r\in I_m$ is an arbitrary element, by the addition principle, we obtain
\[|E(\Gamma(X,\mathcal{P}))|=\sum_{r=1}^m\sum_{t= 1}^{\mu_r}\prod_{i=1}^k\Big( \displaystyle\sum_{j = 1}^i r_{t_j} (n_j!) S(n_i, n_j)\Big)^{m_i-r_{t_i}}.\]
This completes the proof.
\end{proof}

\vspace{0.2cm}

If $\mathcal{P}$ is a uniform partition of a finite set $X$, the following proposition provides a rather simple formula for the size of the set $E(\Gamma(X,\mathcal{P}))$.

\begin{proposition}
Let $\mathcal{P}$ be a uniform $m$-partition of an $n$-element set $X$. Then
\[|E(\Gamma(X,\mathcal{P}))|=\sum_{r=1}^m{m\choose r}r^{(m-r)}(q!)^{m-r},\]
where $q = \frac{n}{m}$ is the size of a block of $\mathcal{P}$.
\end{proposition}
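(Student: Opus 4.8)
The plan is to specialize the general formula for $|E(\Gamma(X,\mathcal{P}))|$ obtained just above to the uniform case. Here $\mathcal{P}$ is a uniform $m$-partition of an $n$-set, so it has a single block size $q=\frac{n}{m}$; in the notation of this section this means $k=1$, $n_1=q$, and $m_1=m$. I would begin by substituting these values into the typical summand $\prod_{i=1}^{k}\bigl(\sum_{j=1}^{i}r_{t_j}(n_j!)S(n_i,n_j)\bigr)^{m_i-r_{t_i}}$ and recording the resulting simplifications.

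Since $k=1$, the product over $i$ collapses to the single factor $i=1$, and the inner sum over $j$ reduces to the term $j=1$. Thus the summand becomes $\bigl(r_{t_1}\,q!\,S(q,q)\bigr)^{m-r_{t_1}}$. Because $S(q,q)=1$, and because $\mathcal{P}$ has only blocks of the single (hence smallest) size $q$, every $r$-subpartition $\mathcal{Q}_{rt}\in\mathfrak{F}_r$ consists of $r$ blocks all of size $q$; consequently $r_{t_1}=r$ and $m-r_{t_1}=m-r$. Hence the summand equals $(r\,q!)^{m-r}=r^{m-r}(q!)^{m-r}$, which no longer depends on $t$.

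Next I would evaluate $\mu_r=|\mathfrak{F}_r|$. Reading directly from the definition of $\mathfrak{F}_r$, in the uniform setting every $r$-subpartition of $\mathcal{P}$ automatically contains a block of the minimal size $q$, since \emph{all} blocks have that size; so $\mathfrak{F}_r$ is simply the collection of all $r$-subpartitions of the $m$-partition $\mathcal{P}$, and therefore $\mu_r=\binom{m}{r}$. Combining this with the previous paragraph gives $\sum_{t=1}^{\mu_r}(r\,q!)^{m-r}=\binom{m}{r}r^{m-r}(q!)^{m-r}$, and summing over $r$ from $1$ to $m$ yields the claimed formula.

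The one step that needs care is the evaluation of $\mu_r$: it should be read off directly from the definition of $\mathfrak{F}_r$ as $\binom{m}{r}$, since in the uniform case the requirement ``contains a block of size $n_1$'' is vacuous. As an independent cross-check I would recount from Theorem \ref{ido-fi-iden}: an idempotent $f$ has $\chi^{(f)}$ idempotent with image $R=I_m\chi^{(f)}$ of some size $r$, the map $f$ restricts to the identity on each block indexed by $R$, and each of the remaining $m-r$ blocks may be sent by an arbitrary bijection (there are $q!$ of these, as all blocks have size $q$) onto any one of the $r$ image blocks. Choosing $R$ in $\binom{m}{r}$ ways and summing over $r$ reproduces $\sum_{r=1}^{m}\binom{m}{r}r^{m-r}(q!)^{m-r}$, confirming the proposition.
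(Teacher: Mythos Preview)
Your proposal is correct and follows essentially the same approach as the paper: both rest on the decomposition of $E(\Gamma(X,\mathcal{P}))$ according to the image $r$-subpartition and on Lemma~\ref{idem-lemma}, with the only cosmetic difference that you obtain the simplified summand by specializing the general theorem (setting $k=1$, $n_1=q$, $m_1=m$, $r_{t_1}=r$) while the paper re-applies Lemma~\ref{idem-lemma} directly in the uniform case. Your decision to read $\mu_r=\binom{m}{r}$ straight from the definition of $\mathfrak{F}_r$ (rather than from the summation formula in Lemma~\ref{size-r-subpart}) is exactly what the paper does as well, and your independent cross-check via Theorem~\ref{ido-fi-iden} is a welcome sanity check that the paper omits.
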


\begin{proof}
Let $r\in I_m$, and let \[A_r = \{f \in \Gamma(X,\mathcal{P})|\; \rank\chi^{(f)} = r\}.\]
We observe that $E(A_r) = \sum E(A_{\mathcal{Q}})$, where the sum runs over all $r$-subpartitions $\mathcal{Q}$ of $\mathcal{P}$ and
\[A_{\mathcal{Q}} = \{f \in \Gamma(X,\mathcal{P})|\; Xf  \mbox{ is the union of all blocks of }\mathcal{Q}\}.\] Note that there are exactly ${m\choose r}$ choices for an $r$-subpartition $\mathcal{Q}$ of the $m$-partition $\mathcal{P}$. Therefore, by Lemma \ref{idem-lemma}, we obtain
$E(A_r) = {m\choose r} r^{m-r} (q!)^{m-r},$ where $q = \frac{n}{m}$.

\vspace{0.1cm}
Since $r\in I_m$ is an arbitrary element, by the addition principle, we get
\[|E(\Gamma(X,\mathcal{P}))| = \displaystyle\sum_{r=1}^m{m\choose r}r^{(m-r)}(q!)^{m-r},\]
where $q = \frac{n}{m}$. This completes the proof.
\end{proof}

\vspace{0.2cm}
\subsection{The cardinality of $\mbox{Reg}(\Gamma(X,\mathcal{P}))$} In this subsection, we count the number of regular elements of the semigroup $\Gamma(X,\mathcal{P})$.

\vspace{0.1cm}
If $\mathcal{P}$ is a uniform partition of a finite set $X$, we know that the semigroup $\Gamma(X, \mathcal{P})$ is regular (cf. \cite[Theorem 4.1]{pei-s05}), and so we get the size of the set $\mbox{Reg}(\Gamma(X,\mathcal{P}))$ by Theorem \ref{size-gamma}. However, note that the semigroup $\Gamma(X,\mathcal{P})$ need not be regular for an arbitrary partition $\mathcal{P}$ of $X$.

\vspace{0.2cm}

For a finite set $X$, let us recall Theorem \ref{Gamma-regulr} which can be restate immediately as follows.
\begin{remark}\label{finite-reg-gamma}
Let $\mathcal{P} = \{X_i|\;i\in I_m\}$ be a partition of a finite set $X$, and let $f\in \Gamma(X, \mathcal{P})$. Then $f$ is regular if and only if for each $j\in I_m\chi^{(f)}$, there exists $i\in I$ such that $|X_i| = |X_j|$ and $X_i f = X_j$.
\end{remark}

\vspace{0.0cm}
We now prove the following lemma.
\begin{lemma}\label{reg-lemma}
Let $\mathcal{P}$ be an $(m, k)$-partition of a finite set $X$, and let $\mathcal{Q}$ be an $r$-subpartition of $\mathcal{P}$ containing at least one block of size $n_1$. Let \[A_{\mathcal{Q}} = \{f \in \Gamma(X,\mathcal{P})|\; Xf  \mbox{ is the union of all blocks of }\mathcal{Q}\}.\] Then
\begin{equation*}
\begin{split}
|\mbox{Reg}(A_{\mathcal{Q}})|&=(r_1!)S(m_1,r_1)(n_1!)^{m_1}\\
&\quad \prod_{i=2}^k\Bigg(\sum _{p=r_i}^{m_i} \bigg({m_i\choose p}(r_i!)S(p,r_i)(n_i!)^p\Big(\sum _{j=1}^{i-1}r_j(n_j!)S(n_i,n_j)\Big)^{m_i-p}\bigg)\Bigg),
\end{split}
\end{equation*}
where $r_i \ge 0$ is the number of blocks in $\mathcal{Q}$ of size $n_i$ for each $i \in I_k$.
\end{lemma}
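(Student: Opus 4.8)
The plan is to read the characterization of regular elements in Remark \ref{finite-reg-gamma} through the block-map description of $\Gamma(X,\mathcal{P})$ supplied by Lemma \ref{fam-func} and Remark \ref{gamma-block-surj}, and then to count by the multiplication principle. Each $f\in\Gamma(X,\mathcal{P})$ is determined by its family $B(f,I_m)$ of surjective block maps. Membership in $A_{\mathcal{Q}}$ forces the set of image blocks of $f$ to be exactly $\mathcal{Q}$, and since a surjective block map $f_a\colon X_a\to X_b$ can exist only when $|X_b|\le|X_a|$, a block of size $n_i$ maps only onto blocks of $\mathcal{Q}$ of size at most $n_i$, and maps \emph{bijectively} precisely when the target has size exactly $n_i$. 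With this in hand, Remark \ref{finite-reg-gamma} reads: $f\in A_{\mathcal{Q}}$ is regular if and only if every block of $\mathcal{Q}$ of size $n_i$ admits a bijective (hence size-$n_i$) preimage block, i.e.\ for each $i$ the size-$n_i$ domain blocks that $f$ sends onto size-$n_i$ blocks of $\mathcal{Q}$ must cover all $r_i$ of them.

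This reformulation shows the conditions decouple across the $k$ size classes: the constraint on the size-$n_i$ domain blocks involves only the $r_i$ size-$n_i$ blocks of $\mathcal{Q}$, while the coverage of any smaller block of $\mathcal{Q}$ is automatically supplied within its own size class. Hence, by the multiplication principle, it suffices to count, for each $i$, the number of ways to define the $m_i$ block maps with domain of size $n_i$ so that every target lies in $\mathcal{Q}$ and all $r_i$ size-$n_i$ blocks of $\mathcal{Q}$ receive a bijective preimage.

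First I would treat the smallest class $i=1$: a size-$n_1$ block can map only onto a size-$n_1$ block of $\mathcal{Q}$, and such a map is automatically bijective. As the image must be all of $\mathcal{Q}$, the induced assignment of the $m_1$ domain blocks to the $r_1$ targets must be surjective, giving $r_1!\,S(m_1,r_1)$ assignments, each block map being one of $n_1!$ bijections; this yields the leading factor $(r_1!)S(m_1,r_1)(n_1!)^{m_1}$ and uses the hypothesis $r_1\ge1$, which is exactly what makes $A_{\mathcal{Q}}\ne\emptyset$. For a class $i\ge2$ I would split the $m_i$ domain blocks according to whether they map onto a size-$n_i$ block of $\mathcal{Q}$ (equivalently, map bijectively) or onto a strictly smaller block of $\mathcal{Q}$. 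Writing $p$ for the number of the former, regularity is precisely the requirement that these $p$ blocks surject onto the $r_i$ size-$n_i$ blocks of $\mathcal{Q}$; choosing them ($\binom{m_i}{p}$ ways), fixing a surjective assignment ($r_i!\,S(p,r_i)$ ways) together with a bijection on each ($(n_i!)^p$ ways), and letting each of the remaining $m_i-p$ blocks map surjectively onto an arbitrary smaller block of $\mathcal{Q}$ ($\sum_{j=1}^{i-1}r_j(n_j!)S(n_i,n_j)$ ways each) produces the summand indexed by $p$. Summing over $p$ from $r_i$ to $m_i$ and multiplying the contributions of all classes gives the stated formula.

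The step I expect to require the most care is the justification that regularity decouples exactly as claimed. I must verify that requiring the bijective (``same-size'') maps to surject onto the $r_i$ size-$n_i$ blocks of $\mathcal{Q}$ is both necessary and sufficient for the Remark \ref{finite-reg-gamma} condition at size $n_i$; that the blocks mapping to strictly smaller targets impose no additional regularity constraint, since each smaller block of $\mathcal{Q}$ is covered and bijectively preimaged within its own class; and that the dichotomy ``bijective vs.\ strictly-shrinking'' is exhaustive and disjoint, so no configuration is double-counted or omitted. Confirming these points, together with the edge case $i=1$ (where the empty inner sum forces $p=m_1$ and recovers the separated leading factor), is the crux; the remainder is a routine application of the addition and multiplication principles.
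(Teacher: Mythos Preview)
Your proposal is correct and follows essentially the same approach as the paper's own proof: both decompose $f$ into its family of surjective block maps via Lemma~\ref{fam-func} and Remark~\ref{gamma-block-surj}, split the count by domain-block size class, handle the smallest class $i=1$ separately, and for each $i\ge 2$ condition on the number $p$ of size-$n_i$ domain blocks sent (bijectively) to size-$n_i$ blocks of $\mathcal{Q}$, invoking Remark~\ref{finite-reg-gamma} to see that regularity forces these $p$ blocks to surject onto the $r_i$ targets. Your explicit justification of the decoupling across size classes and of the bijective/strictly-shrinking dichotomy is somewhat more detailed than the paper's, but the argument is the same.
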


\begin{proof}
From Lemma \ref{fam-func} and Remark \ref{gamma-block-surj}, we know that each map $f\in \mbox{Reg}(A_{\mathcal{Q}})$ is uniquely determined by the $m$-family $B(f, I_m)$ of surjective block maps. Therefore, it suffices to count the total number of such possible $m$-families $B(f, I_m)$ of surjective block maps. Since $\mathcal{P}$ has $k$ different size blocks, we break up such $m$-families into $k$ number of $m_i$-subfamilies of surjective block maps according to their domain size.

\vspace{0.0cm}
Note that $\mathcal{P}$ has $m_1$ blocks of the smallest size $n_1$. We first count the number of $m_1$-subfamilies of surjective block maps from $m_1$ distinct blocks of size $n_1$. Note that any block in $\mathcal{P}$ of size $n_1$ can be mapped onto any block in $\mathcal{Q}$ of size $n_1$ only. Therefore, all the $m_1$ blocks in $\mathcal{P}$ of size $n_1$ can be mapped onto $r_1$ blocks in $\mathcal{Q}$ of size $n_1$ in $(r_1!)S(m_1,r_1)$ ways. Moreover, in each such way, the number of surjective block maps is $(n_1!)^{m_1}$. Therefore, the number of possible $m_1$-subfamilies of surjective block maps from $m_1$ distinct blocks of size $n_1$ is $(r_1!)S(m_1,r_1)(n_1!)^{m_1}$.

\vspace{0.2cm}
Let $i\in I_k$ be such that $i\ge 2$. Since $\mathcal{P}$ has $m_i$ blocks of size $n_i$, we now count the number of $m_i$-subfamilies of surjective block maps from $m_i$ distinct blocks of size $n_i$. Note that any block in $\mathcal{P}$ of size $n_i$ can be mapped onto any block in $\mathcal{Q}$ of size $n_j$, where $n_j \le n_i$. By Remark \ref{finite-reg-gamma}, note that there are at least $r_i$ blocks in $\mathcal{P}$ of size $n_i$ which will be mapped onto $r_i$ blocks in $\mathcal{Q}$ of size $n_i$, and the remaining at most $(m_i-r_i)$ blocks in $\mathcal{P}$ of size $n_i$ will be mapped onto blocks in $\mathcal{Q}$ of size $n_j$, where $n_j < n_i$.

\vspace{0.1cm}
Let $p$, where $r_i\leq p\leq m_i$,  be the number of blocks in $\mathcal{P}$ of size $n_i$ which map onto $r_i$ blocks in $\mathcal{Q}$ of size $n_i$. Clearly, these $p$ blocks can be mapped onto $r_i$ blocks in $(r_i !)S(p,r_i)$ ways. Moreover, in each such way, there are $(n_i!)^{p}$ surjective block maps. Note that there are ${m_i\choose p}$ ways to choose $p$ blocks among $m_i$ blocks. Therefore, by the multiplication principle, the number of such surjective block maps is ${m_i\choose p}(r_i!)S(p,r_i)(n_i!)^{p}$.

\vspace{0.1cm}
Further, each of the remaining $(m_i-p)$ blocks in $\mathcal{P}$ of size $n_i$ can be mapped onto any block in $\mathcal{Q}$ of size $n_j$, where $n_j < n_i$. Note that the number of surjective block maps from a block in $\mathcal{P}$ of size $n_i$ is $\sum_{j=1}^{i-1}r_j(n_j!)S(n_i,n_j)$. Therefore, by the multiplication principle, the number of such surjective block maps is $\big(\sum_{j=1}^{i-1}r_j(n_j!)S(n_i,n_j)\big) ^{m_i-p}$. Hence, for that fixed $p$, the number of $m_i$-subfamilies of surjective block maps from $m_i$ distinct blocks of size $n_i$ is
\[\bigg({m_i\choose p}(r_i!)S(p,r_i)(n_i!)^{p}\Big(\sum _{j=1}^{i-1}r_j(n_j!)S(n_i,n_j)\Big) ^{m_i-p}\bigg)\] by the multiplication principle.
Since $p$ runs from $r_i$ to $m_i$, summing over all admissible $p$, the total number of possible $m_i$-subfamilies of surjective block maps from $m_i$ distinct blocks of size $n_i$ is
 \[\sum_{p=r_i}^{m_i}\bigg({m_i\choose p}(r_i!)S(p,r_i)(n_i!)^{p}\Big(\sum _{j=1}^{i-1}r_j(n_j!)S(n_i,n_j)\Big) ^{m_i-p}\bigg).\]
Since $i\in I_k$ is an arbitrary element greater than $1$, by the multiplication principle, we obtain the total number of possible $m$-families of surjective block maps. This completes the proof.
\end{proof}


The following theorem counts the number of regular elements of the semigroup $\Gamma(X, \mathcal{P})$.
\begin{theorem}
Let $\mathcal{P}$ be an $(m, k)$-partition of a finite set $X$. Then
\begin{align*}
|\mbox{Reg} (\Gamma(X,\mathcal{P}))| &= \sum_{r=1}^m\sum_{t= 1}^{\mu_r}\Bigg((r_{t_1}!)S(m_1,r_{t_1})(n_1!)^{m_1}\\
&\quad \prod_{i=2}^k\Bigg(\sum _{p=r_{t_i}}^{m_i} \bigg({m_i\choose p}(r_{t_i}!)S(p,r_{t_i})(n_i!)^p\Big(\sum _{j=1}^{i-1}r_{t_j}(n_j!)S(n_i,n_j)\Big)^{m_i-p}\bigg)\Bigg)\Bigg),
\end{align*}
where $r_{t_i}\ge 0$ is the number of blocks in $\mathcal{Q}_{rt} \in \mathfrak{F}_r$ of size $n_i$ for each $i \in I_k$.
\end{theorem}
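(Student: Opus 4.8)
The plan is to reproduce the stratification argument used for $|E(\Gamma(X,\mathcal{P}))|$ in the preceding theorem, with Lemma \ref{reg-lemma} playing the role that Lemma \ref{idem-lemma} played there. The heavy combinatorial work has already been carried out in Lemma \ref{reg-lemma}, which counts the regular maps whose image is a prescribed subpartition; what remains is to organize these counts over all admissible images by the addition and multiplication principles.

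First I would fix $r \in I_m$ and put $A_r = \{f \in \Gamma(X,\mathcal{P}) \mid \rank \chi^{(f)} = r\}$. For $f \in \Gamma(X,\mathcal{P})$ we have $Xf = \bigcup_{i} X_{i\chi^{(f)}}$, so $Xf$ is a union of exactly $\rank \chi^{(f)}$ distinct blocks; hence the sets $A_r$ partition $\Gamma(X,\mathcal{P})$ according to the number of blocks in the image, and correspondingly the sets $\mbox{Reg}(A_r)$ partition $\mbox{Reg}(\Gamma(X,\mathcal{P}))$. Inside a fixed $A_r$ I would refine this further by recording which $r$-subpartition $\mathcal{Q}$ actually occurs as the image, setting $A_{\mathcal{Q}} = \{f \in \Gamma(X,\mathcal{P}) \mid Xf \text{ is the union of all blocks of } \mathcal{Q}\}$; since the image determines this subpartition uniquely, the sets $\mbox{Reg}(A_{\mathcal{Q}})$ form a disjoint decomposition of $\mbox{Reg}(A_r)$ as $\mathcal{Q}$ ranges over the $r$-subpartitions of $\mathcal{P}$.

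The only step requiring care is identifying exactly which subpartitions can occur, and this is where Proposition \ref{image-size-equal} enters. A block of smallest size $n_1$ is sent by any $f \in \Gamma(X,\mathcal{P})$ onto a block of equal, and therefore smallest, size, so the image of every such $f$ contains at least one block of size $n_1$; consequently $A_{\mathcal{Q}}$ is empty unless $\mathcal{Q} \in \mathfrak{F}_r$, and these are precisely the subpartitions meeting the hypothesis of Lemma \ref{reg-lemma}. Writing $\mathfrak{F}_r = \{\mathcal{Q}_{r1}, \ldots, \mathcal{Q}_{r\mu_r}\}$ and letting $r_{t_i}$ denote the number of $n_i$-blocks in $\mathcal{Q}_{rt}$, I would then have $|\mbox{Reg}(A_r)| = \sum_{t=1}^{\mu_r} |\mbox{Reg}(A_{\mathcal{Q}_{rt}})|$, substitute the formula from Lemma \ref{reg-lemma} for each summand, and finally sum over $r$ from $1$ to $m$ by the addition principle to arrive at the stated double sum.
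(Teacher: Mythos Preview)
Your proposal is correct and follows essentially the same stratification argument as the paper: decompose $\mbox{Reg}(\Gamma(X,\mathcal{P}))$ by $\rank\chi^{(f)}=r$, then by the image subpartition $\mathcal{Q}_{rt}\in\mathfrak{F}_r$, and apply Lemma~\ref{reg-lemma} to each piece before summing. If anything, you are slightly more explicit than the paper in invoking Proposition~\ref{image-size-equal} to justify restricting to $\mathfrak{F}_r$, whereas the paper simply observes this.
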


\begin{proof}
Let $r\in I_m$, and let \[A_r = \{f \in \Gamma(X,\mathcal{P})|\; \rank\chi^{(f)} = r\}.\]
We observe that $\mbox{Reg}(A_r) = \sum \mbox{Reg}(A_{\mathcal{Q}_{rt}})$, where the sum runs over all $r$-subpartitions $\mathcal{Q}_{rt} \in \mathfrak{F}_r$ and \[A_{\mathcal{Q}_{rt}} = \{f \in \Gamma(X,\mathcal{P})|\; Xf  \mbox{ is the union of all blocks of }\mathcal{Q}_{rt}\}.\]

\vspace{0.1cm}
Note that $|\mathfrak{F}_r| = \mu_r$. Therefore,
\begin{align*}
|\mbox{Reg}(A_r)| &= \sum_{t= 1}^{\mu_r} \mbox{Reg}(A_{\mathcal{Q}_{rt}})\\
&= \sum_{t= 1}^{\mu_r}\Bigg((r_{t_1}!)S(m_1,r_{t_1})(n_1!)^{m_1}\\
&\quad \prod_{i=2}^k\Bigg(\sum _{p=r_{t_i}}^{m_i} \bigg({m_i\choose p}(r_{t_i}!)S(p,r_{t_i})(n_i!)^p\Big(\sum _{j=1}^{i-1}r_{t_j}(n_j!)S(n_i,n_j)\Big)^{m_i-p}\bigg)\Bigg)\Bigg)
\end{align*}
by Lemma \ref{reg-lemma}. Since $r\in I_m$ is an arbitrary element, by the addition principle, we obtain the desired formula of $|\mbox{Reg}(\Gamma(X,\mathcal{P}))|$. This completes the proof.
\end{proof}






\begin{thebibliography}{10}


\bibitem{arjo15}
J. ~Ara{\'u}jo, W. ~Bentz, J. ~D. ~Mitchell, and C. ~Schneider.
\newblock{The rank of the semigroup of transformations stabilising a partition of a finite set}.
\newblock{\em Mathematical Proceedings of the Cambridge Philosophical Society}, 159(2): 339--353, 2015.


\bibitem{arjo-c04}
J. ~Ara{\'u}jo  and  J. ~Konieczny.
\newblock{Semigroups of transformations preserving an equivalence relation and a cross-section}.
\newblock{\em Communications in Algebra}, 32(5): 1917--1935, 2004.


\bibitem{arjo09}
J. ~Ara{\'u}jo and C. ~Schneider.
\newblock{The rank of the endomorphism monoid of a uniform partition}.
\newblock{\em Semigroup Forum}, 78(3): 498--510, 2009.


\bibitem{t-blyth83}
T. ~S. ~Blyth and R. ~McFadden.    
\newblock{Unit orthodox semigroups}.
\newblock{\em Glasgow Mathematical Journal}, 24(1): 39--42, 1983.


\bibitem{bru10}
R. ~A. ~Brualdi.
\newblock{\em Introductory Combinatorics}, 5th edition.
\newblock{Pearson Prentice Hall}, 2010.


\bibitem{y-chaiya19}
Y. ~Chaiya.
\newblock{Natural partial order and finiteness conditions on semigroups of linear transformations with invariant subspaces}.
\newblock{Semigroup Forum}, 99: 579--590, 2019.


\bibitem{schen74}
S. ~Y. ~Chen and S. ~C. ~Hsieh.
\newblock{Factorizable inverse semigroups}.
\newblock{\em Semigroup Forum}, 8: 283--297, 1974.


\bibitem{clifford61}
A. ~H. ~Clifford and G. ~B. ~Preston.
\newblock{\em The Algebraic Theory of Semigroups, Volume I}.
\newblock{American Mathematical Society, Number 7 in Mathematical Surveys}, 1961.


\bibitem{alarcao80}
H. ~D'Alarcao.
\newblock{Factorizable as a finiteness condition}.
\newblock{\em Semigroup Forum}, 20: 281--282, 1980.
    

\bibitem{east-c16}
I. ~Dolinka and J. ~East.
\newblock{Idempotent generation in the endomorphism monoid of a uniform partition}.
\newblock{\em Communications in Algebra}, 44(12): 5179--5198, 2016.


\bibitem{east16}
I. ~Dolinka, J. ~East, and J. ~D. ~Mitchell.
\newblock{Idempotent rank in the endomorphism monoid of a non-uniform partition}.
\newblock{\em Bulletin of the Australian Mathematical Society}, 93(1): 73--91, 2016.


\bibitem{ehrlich68}
G. ~Ehrlich.
\newblock{Unit-regular rings}.
\newblock{\em Portugaliae Mathematica}, 27(4): 209--212, 1968.
    

\bibitem{v-ferna-c11}
V. ~H. ~Fernandes  and  T. ~M. ~Quinteiro.
\newblock{On the monoids of transformations that preserve the order and a uniform partition}.
\newblock{\em Communications in Algebra}, 39(8): 2798--2815, 2011.


\bibitem{v-ferna12}
V. ~H. ~Fernandes  and  T. ~M. ~Quinteiro.
\newblock{The cardinal of various monoids of transformations that preserve a uniform partition}.
\newblock{\em Bulletin of the Malaysian Mathematical Sciences Society}, 35(4): 885--896, 2012.


\bibitem{v-ferna-c14}
V. ~H. ~Fernandes  and  T. ~M. ~Quinteiro.
\newblock{On the ranks of certain monoids of transformations that preserve a uniform partition}.
\newblock{\em Communications in Algebra}, 42(2): 615--636, 2014.


\bibitem{j-fount02}
J. ~Fountain.
\newblock{An introduction to covers for semigroups}.
\newblock{In \em Semigroups, Algorithms, Automata and Languages}, pages 155--194.
\newblock{World Scientific, Singapore}, 2002.
    

\bibitem{gan-maz09}
O. ~Ganyushkin and V. ~Mazorchuk.
\newblock{\em Classical Finite Transformation Semigroups, {A}n Introduction}.
\newblock{volume 9 of Algebra and Applications, Springer}, 2009.


\bibitem{hickey97}
J. ~B. ~Hickey and M. ~V. Lawson.  
\newblock{Unit regular monoids}.
\newblock{\em Proceedings of the Royal Society of Edinburgh: Section A Mathematics}, 127(1): 127--143, 1997.
   

\bibitem{howie95}
J. ~M. ~Howie.
\newblock{\em Fundamentals of Semigroup Theory}.
\newblock{\em volume 12 of London Mathematical Society Monographs. New Series}.
 \newblock{The Clarendon Press, Oxford University Press, New York}, 1995.


\bibitem{hpei-s94}
H. ~Pei.
\newblock{Equivalences, $\alpha$-semigroups and $\alpha$-congruences}.
\newblock{\em Semigroup Forum}, 49(1): 49--58, 1994.


\bibitem{pei-s98}
H. ~Pei.
\newblock{Some $\alpha$-semigroups inducing certain lattices}.
\newblock{\em Semigroup Forum}, 57: 48--59, 1998.


\bibitem{pei-s05}
H. ~Pei.
\newblock{On the rank of the semigroup ${T_E(X)}$}.
\newblock{\em Semigroup Forum}, 70(1): 107--117, 2005.


\bibitem{pei-c05}
H ~Pei.
\newblock{Regularity and {G}reen's relations for semigroups of transformations that preserve an equivalence}.
\newblock{\em Communications in Algebra}, 33(1): 109--118, 2005.


\bibitem{hpei-ac11}
H. ~Pei and H. ~Zhou.
\newblock{Abundant semigroups of transformations preserving an equivalence relation}.
\newblock{\em Algebra Colloquium}, 18(1): 77--82, 2011.
    

\bibitem{hpeidin-s05}
P. ~Huisheng and D. ~Zou.
\newblock{Green's equivalences on semigroups of transformations preserving order and an equivalence relation}.
\newblock{\em Semigroup Forum}, 71(2): 241--251, 2005.


\bibitem{puri16}
P. ~Purisang and J. ~Rakbud.
\newblock{Regularity of transformation semigroups defined by a partition}.
\newblock{\em Communications of the Korean Mathematical Society}, 31(2): 217--227, 2016.


\bibitem{shubh-c20}
M. ~Sarkar and S. ~N. ~Singh.
\newblock{On certain semigroups of transformations that preserve a partition}.
\newblock{\em Communications in Algebra}, 2020.
\newblock{\url{https://doi.org/10.1080/00927872.2020.1799224}}.


\bibitem{lsun-13}
L. ~Sun.
\newblock{Combinatorial results for certain semigroups of transformations preserving orientation and a uniform partition}.
\newblock{\em Bulletin of the Malaysian Mathematical Sciences Society}, 36(1): 179--192, 2013.



\bibitem{lsun-s07}
L. ~Sun, H. ~Pei, and Z. ~Cheng.
\newblock{Regularity and {G}reen's relations for semigroups of transformations preserving orientation and an equivalence}.
\newblock{\em Semigroup Forum}, 74(3): 473--486, 2007.


\bibitem{lsun-jaa13}
L. ~Sun and J. ~Sun.
\newblock{A partial order on transformation semigroups that preserve double direction equivalence relation}.
\newblock{\em Journal of Algebra and Its Applications}, 12(8): 1350041, 2013.


\bibitem{ytira79}
Y. ~Tirasupa.
\newblock{Factorizable transformation semigroups}.
\newblock{\em Semigroup Forum}, 18: 15--19: 1979.
    

\end{thebibliography}
\end{document}